\newtheorem{theorem}{Theorem}
\newtheorem{prop}[theorem]{Proposition}
\newtheorem{cor}[theorem]{Corollary}
\newtheorem{lemma}[theorem]{Lemma}
\newtheorem{theore}{Theorem}
\newtheorem{theor}{Theorem}
\newtheorem{theo}{Theorem}
\theoremstyle{definition}
\newtheorem{definition}[theorem]{Definition}
\newtheorem{example}[theorem]{Example}
\newtheorem{remark}[theorem]{Remark}
\newcommand{\idiot}[1]{\vspace{5 mm}\par \noindent
\marginpar{\textsc{Note}}
\framebox{\begin{minipage}[c]{0.95 \textwidth}
#1 \end{minipage}}\vspace{5 mm}\par}
\renewcommand{\idiot}[1]{}
\def\la{{\lambda}}
\def\al{{\alpha}}
\def\be{{\beta}}
\def\ga{{\gamma}}
\def\QQ{{\mathbb Q}}
\def\CC{{\mathbb C}}
\def\st{{\tilde s}}
\def\mub{{\bar \mu}}
\def\hht{{\tilde h}}
\def\mt{{\tilde m}}
\def\bP{{\mathcal P}}
\def\bT{{\mathcal T}}
\def\bC{{\mathcal C}}
\def\bfp{{\mathbf p}}
\def\bfpb{{\overline \bfp}}
\def\mvdash{{\,\vdash\!\!\vdash}}
\def\HX{{H}}
\def\gb{{\overline g}}
\def\mub{{\overline \mu}}
\def\gab{{\overline \ga}}
\def\dcl{{\{\!\!\{}}
\def\dcr{{\}\!\!\}}}
\newcommand{\pchoose}[2]{\begin{pmatrix}#1\\ #2\end{pmatrix}}
\newdimen\squaresize \squaresize=10pt
\newdimen\thickness \thickness=0.4pt
\def\square#1{\hbox{\vrule width \thickness
     \vbox to \squaresize{\hrule height \thickness\vss
        \hbox to \squaresize{\hss#1\hss}
     \vss\hrule height\thickness}
\unskip\vrule width \thickness}
\kern-\thickness}
\def\vsquare#1{\vbox{\square{$#1$}}\kern-\thickness}
\def\young#1{
\vbox{\smallskip\offinterlineskip
\halign{&\vsquare{##}\cr #1}}}
\def\thisbox#1{\kern-.09ex\fbox{#1}}
\def\downbox#1{\lower1.200em\hbox{#1}}
\newdimen\Squaresize \Squaresize=15pt
\newdimen\Thickness \Thickness=0.4pt
\def\Square#1{\hbox{\vrule width \Thickness
     \vbox to \Squaresize{\hrule height \Thickness\vss
        \hbox to \Squaresize{\hss#1\hss}
     \vss\hrule height\Thickness}
\unskip\vrule width \Thickness}
\kern-\Thickness}
\def\Vsquare#1{\vbox{\Square{$#1$}}\kern-\Thickness}
\def\Young#1{
\vbox{\smallskip\offinterlineskip
\halign{&\Vsquare{##}\cr #1}}}
\title[Character symmetric functions]{Symmetric group characters
as symmetric functions}
\author[Rosa Orellana]{Rosa Orellana}%
\address{Dartmouth College, Mathematics Department, Hanover, NH 03755, USA} \email{rosa.c.orellana@dartmouth.edu}%
\author[Mike Zabrocki]{Mike Zabrocki}%
\address{Department of Mathematics and Statistics, York University, Toronto, Ontario M3J 1P3,
Canada} \email{zabrocki@mathstat.yorku.ca}%
\date{}  
\thanks{Work supported by NSF grants DMS-1300512 and DMS-1458061, and by NSERC}
\begin{document}

\begin{abstract}
We introduce a basis of the symmetric functions that evaluates to the (irreducible) characters of the symmetric group, just 
as the Schur functions evaluate to the irreducible characters of $GL_n$ modules.
Our main result gives three different characterizations for this basis.
One of the characterizations shows that the structure coefficients for the
(outer) product of these functions are the stable Kronecker coefficients.
The results in this paper focus on developing the fundamental properties of this basis.
\end{abstract}

\maketitle

\tableofcontents

\begin{section}{Introduction}

The ring of symmetric functions and the representation theory of both the symmetric group,
$S_k$, and the general linear group, $GL_n$, are deeply connected.  It is well-known that
the ring of symmetric functions $Sym$ is the ``universal'' character ring of $GL_n$. In fact,
irreducible polynomial $GL_n$ characters are obtained as evaluations of Schur functions, $s_\lambda$,
at the eigenvalues of the elements in $GL_n$.  For this reason, Schur functions serve as a tool to
decompose $GL_n$-modules in term of irreducible submodules.  The main objective of this paper is to show
that the embedding of the symmetric group, $S_n$, as permutation matrices in $GL_n$ gives rise to a
basis $\{ \st_\la\}$ of the ring of symmetric functions which evaluate to the characters of $S_n$
in a similar manner.

Schur functions are also useful in computing the decomposition of the tensor product 
of polynomial irreducible representations of $GL_n$. It is well known that this tensor product corresponds
to the (outer) product of Schur functions. More specifically,
for any three partitions $\la, \mu$ and $\nu$ such that $|\nu| = |\la|+|\mu|$, the
multiplicity of irreducible representation $W^\nu$ in $W^\lambda \otimes W^\mu$ are the Littlewood-Richardson
coefficients  (denoted $c_{\lambda\mu}^\nu$).  These are the same integers that occur as the structure 
coefficients for the Schur basis, that is,
\[s_\la s_\mu = \sum_\nu c_{\la\mu}^\nu s_\nu . \]

The multiplicities of the tensor product of irreducible modules of the symmetric group have some dependence on $n$,
but for partitions $\la$, $\mu$ and $\nu$ and $n$ sufficiently large,
the multiplicity of the irreducible representation $S^{(n-|\nu|,\nu)}$ in
$S^{(n-|\la|,\la)} \otimes S^{(n-|\mu|,\mu)}$ is independent of $n$ and the values are known
as the stable or reduced Kronecker coefficients (denoted $\overline{g}_{\la\mu}^\nu$,
see for instance \cite{BOR1, Gu, Murg2, Murg3}).
One of our main results (Theorem \ref{th:uniqueness} part (3)) is a characterization of 
symmetric functions $\st_\lambda$ as a family of elements whose structure coefficients are the
reduced Kronecker coefficients, that is,
\[\st_\la \st_\mu = \sum_\nu \overline{g}_{\la\mu}^\nu \st_\nu . \]

Our main theorem is the following characterizations of the basis $\{\st_\lambda\}$ of
inhomogeneous degree:

\begin{theorem}\label{th:uniqueness}
There is a unique (in-homogeneous) basis of the symmetric functions, $\{\st_\lambda\}$, that are
characterized by any of the following three properties:
\begin{enumerate}
\item For a fixed partition $\lambda$, $\st_\la$ is
the unique symmetric function with the property that
for all $n \geq |\la|+\la_1$ and for all partitions $\gamma$ of $n$,
$$\st_\lambda(x_1, x_2, \ldots, x_n) = \chi^{(n-|\la|,\la)}(\ga),$$
where $x_1, x_2, \ldots, x_n$ are the eigenvalues of a permutation matrix of cycle structure 
$\ga$ and $\chi^{(n-|\la|,\la)}(\ga)$
are the values of the irreducible characters of the symmetric group. 
\item The set $\{ \st_\lambda \}$ is
the unique family of symmetric functions such
that, for a sufficiently large $n$, if the multiplicity of
the $S_n$ module $S^{(n-|\mu|,\mu)}$ in the decomposition of the restriction
of the $GL_n$ module $W^\lambda$ to $S_n$ is $r_{\la\mu}$,
that is,
$$W^\lambda\downarrow_{S_n}^{GL_n}
\simeq \bigoplus_{\mu}
(S^{(n-|\mu|,\mu)})^{\oplus r_{\la\mu}},
\hbox{ then }
s_\lambda = \sum_{\mu} r_{\la\mu} \st_{\mu}~.$$
\item The set $\{ \st_\lambda \}$ is
the unique family of symmetric functions such
that $s_{1^r} = \st_{1^r} + \st_{1^{r-1}}$ and
$$\st_\la \st_\mu = \sum_{\nu}
{\overline g}_{\la\mu}^\nu \st_\nu~.$$
\end{enumerate}
\end{theorem}

We call this new basis the
{\it irreducible character basis} (or {\it $\st$-basis}). The basis $\{{\tilde s}_\lambda\}$ 
plays the same role for the symmetric group as the Schur functions
$\{ s_\lambda\}$ do for $GL_n$.  

If we use the notation $G(GL_n)$ (respectively, $G(S_n)$)
to denote the Grothendieck ring of  (polynomial) $GL_n$-representations (respectively, $S_n$),
then the map $ch: G(GL_n)\rightarrow Sym$, can be defined via the trace function
(see, for instance, \cite{Stanley} Prop. A2.3 for details). Our main theorem
implies that there exists a map $\tilde{ch}$ which maps the irreducible
representation, $S^{(n-|\lambda|,\lambda)}$ of $S_n$ to $\st_\lambda\in Sym$ such
that the following diagram commutes.

\begin{center}
\includegraphics[width=1.75in]{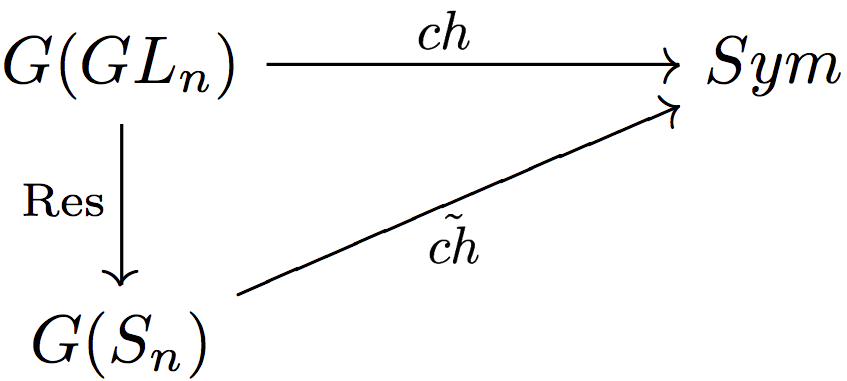}~.
\end{center}
The $\tilde{ch}$ map can be defined as a lift of the trace map on the representations of $S_n$.

The expansion of a Schur function in terms of the $\st$-basis, {\it i.e}, $s_\lambda = \sum_\mu r_{\la\mu} \st_\mu$, is equivalent to what we call the ``restriction problem" which asks for the decomposition of a (polynomial) irreducible representation of $GL_n$, $W^\lambda$ when restricted to the symmetric group.  This problem has been studied in the literature,  \cite{ButlerKing, King, Lit, Nis, ScharfThibon, STW, H, HSW, NPPS1, NPPS2}.  The coefficient $r_{\la\mu}$ is the multiplicity of the irreducible $S_n$ module indexed by $(n-|\mu|,\mu)$ in the restriction of the irreducible representation $W^\lambda$ of $GL_n$.  In \cite{Lit}, Littlewood gives a formula for this using plethysms and Scharf
and Thibon  \cite{ScharfThibon} give a proof using Hopf algebra techniques that the coefficients have the expression
$$r_{\la\mu} = \left< s_\la, s_{(n-|\mu|,\mu)}[1+s_1+s_2+\cdots] \right>$$
for an integer $n$ which is sufficiently large.
Details for computing the plethysm  $f[1+s_1+s_2+\cdots]$ can be found in  \cite{Mac, Stanley}.  Finding an explicit 
combinatorial expression for the coefficients $r_{\la\mu}$ remains a motivating open problem.

To study the restriction problem, Butler and King \cite{ButlerKing} considered the embedding of $S_n$ in $GL_{n-1}$. 
In their paper they describe a method for writing symmetric group characters in terms of Schur functions.   
However, they fell short of defining symmetric group characters as a basis of symmetric functions.  
The embedding they considered would lead to symmetric functions that differ from ours by a plethystic substitution. 
Using our notation, Table II of \cite{ButlerKing} is the Schur expansion of the symmetric functions $\st_\lambda[X+1]$ 
for partitions $|\la| \leq 5$.

The irreducible characters of $S_k$, $\chi^\lambda$, are well known to occur as change of basis
coefficients when we write the power symmetric functions, $p_\mu$, in terms of the Schur functions,
\begin{equation}\label{ptos}
p_{\mu} = \sum_{\lambda \vdash k}\chi^\lambda(\mu) s_\lambda
\end{equation}
where $\chi^\lambda(\mu)$ is the value of $\chi^\lambda$ at the conjugacy class indexed by $\mu$. 

In the early 1990s Martin \cite{Ma1,Ma2,Ma3,Ma4} and Jones \cite{Jo} introduced the
{\it partition algebra}, $P_k(n)$. Jones showed that if we
identify $S_n$ with the permutation matrices in $GL_n$ and
act diagonally on $V^{\otimes k}$, then the centralizer
algebra we obtain is isomorphic to $P_k(n)$ when $n\geq 2k$.
This means that $S_n$ and $P_k(n)$ are Schur-Weyl duals of each other.

The development of the representation theory of the partition algebra
\cite{BDE, BDO, BH, BHH, Hal, HalRam, Jo, Ma1, Ma2, Ma3, Ma4}
has shown that there are close connections between the characters of the partition algebra 
and the characters of the symmetric group considered through the embedding using permutation matrices.  
In fact, we find that the characters of the partition
algebra appear as the change of basis coefficients
between the power sum basis and the irreducible character
basis. That is, in analogy to  (\ref{ptos}) we have
\[p_{\mu} = \sum_{\lambda}\chi^\lambda_{P_k(n)}(d_\mu) \st_\lambda\]
where $\chi^\lambda_{P_k(n)}(d_\mu)$ denotes the irreducible
character of $P_k(n)$ indexed by $\lambda$ and evaluated at an
element $d_\mu$ which is analogous to a conjugacy class representative
(see \cite{Hal} for further details).

We remark that our new basis provides a unifying mantle for many different objects connected with
Kronecker coefficients such as the representation theory of the symmetric group and that of the
partition algebra, character polynomials, etc.  In particular, in Section \ref{sec:MNrule},
we provide a symmetric function version of the Murnaghan-Nakayama rule for computing the
characters of the partition algebra, these were computed in \cite{Hal} via different methods.
We also show in Section  \ref{sec:stcharpoly}  that character polynomials can be obtained as
evaluations of our symmetric functions using results in \cite{GG}.   Further, the study of our
new basis has led to the introduction of new combinatorial objects that will be useful  in
developing algorithms involving symmetric group characters and partition algebra characters.

The reader interested in computing $\st_\lambda$ from the formulae in this
paper is recommended to use Equation \eqref{eq:pexpansionofst}
because this expression provides an explicit expansion in terms of the power sum basis.
The definition (see \eqref{eq:definingrel} and \eqref{eq:sttoht})
makes a combinatorial connection with multiset tableaux and can be used to
compute the elements recursively.

In this paper, we also introduce another basis of symmetric functions $\{\hht_\lambda\}$
that evaluate to the induced trivial characters from a Young subgroup of the symmetric group 
to the full symmetric group.  This basis has a close combinatorial connection with multiset partitions 
and provides a useful tool in developing the change of basis coefficients.

In the third characterization of Theorem \ref{th:uniqueness}, we saw that the structure coefficients
of the $\st$-basis are the stable Kronecker coefficients. Therefore, our new basis connects to the Grothendieck
ring of the tensor category $Rep(S_t)$ of Deligne \cite{Deligne, EA}, which has simple objects that have
as structure coefficients the stable Kronecker coefficients.  In addition, our basis should provide a
more compact way to encode the characters of FI-modules studied by Church, Ellenberg and Farb
\cite{ChurchFarb, ChurchEllenbergFarb}.

Similar results have been obtained for other subgroups of $GL_n$. 
For example, Koike and Terada \cite{KT} developed symmetric function
expressions for characters of orthogonal and symplectic groups.

Additional information about this basis can be found in two followup publications \cite{OZ1, OZ2}.

\subsection*{Acknowledgements} The authors used
Sage \cite{sage, sage-co} to perform computations and benefited
from the work of the developers of this platform.
After we posted this paper, Sami Assaf and David Speyer
shared with us their research notes where they
had independently discovered the same symmetric functions.  Their results,
which identify the sign of a coefficient of a Schur function
in this basis, appear in \cite{AssafSpeyer}.

\end{section}

\begin{section}{Notation and Preliminaries}

In this section we provide definitions of the combinatorial objects arising in this work and establish notation and conventions.

For non-negative integers $n$ and $\ell$, a {\it partition}
of size $n$ and length $\ell$ is a sequence of positive integers,
$\lambda = (\lambda_1, \lambda_2, \ldots, \lambda_{\ell})$,
such that $\lambda_1 \geq \lambda_2 \geq  \cdots \geq \la_{\ell} > 0$
and $\lambda_1 + \lambda_2 + \cdots + \lambda_\ell = n$.
The {\it size} of the partition is denoted $|\la| = n$ and
the {\it length} of the partition is denoted $\ell(\la)=\ell$.
We will often use the shorthand notation $\la \vdash n$
to indicate that $\lambda$ is a partition of $n$.
The symbols $\lambda$ and $\mu$ will be reserved exclusively
for partitions.  Let $m_i(\la)$ represent the number of
times that $i$ appears in the partition $\la$.  Sometimes it will be
convenient to represent our partitions in
exponential notation $\la = (1^{m_1}2^{m_2}\cdots k^{m_k})$, where  
$m_i = m_i(\la)$ is the number of times that part $i$ occurs in $\lambda$.
Using this notation
the number of permutations with cycle structure $\lambda\vdash n$ is
$\frac{n!}{z_\la}$ where
\begin{equation}
z_\la = \prod_{i=1}^{\la_1} m_i(\la)! i^{m_i(\la)}~.
\end{equation}
The most common operation we use is that of adding a
part of size $n$ to the beginning of a partition.  This
is denoted $(n, \la)$.  If $n < \la_1$, this
sequence will no longer be an integer partition and we will have
to interpret the object appropriately.

The {\it Young diagram} of a partition
$\la$ are the set of points (or cells)
$\{ (i,j) : 1 \leq i \leq \la_j, 1 \leq j \leq \ell(\la) \}$.
We will represent these cells as stacks of boxes in
the first quadrant (following `French notation').  
A {\it tableau} is a map from the set of
cells in the diagram of the partition to a set of labels.  
We represent a tableau by filling the cells of the 
diagram of a partition with the labels. In our case, we will
encounter tableaux where only a subset of the cells are
mapped to labels (some boxes will be empty).  A tableau $T$ is {\it column strict} if
$T(i,j)\leq T(i+1,j)$ and $T(i,j)<T(i,j+1)$ for all the
filled cells of the tableau.  The {\it content} of a tableau is the 
multiset containing the total number of occurrences of each number.

A {\it multiset} is a set where elements can be repeated.  
To differentiate multisets from sets we use double brackets to denote multisets, i.e.,  
$\dcl b_1, b_2, \ldots, b_r \dcr$.  Multisets will also be represented by
exponential notation, in this case the multiset 
$\dcl 1^{a_1},2^{a_2},\ldots, \ell^{a_\ell}\dcr$
represents the multiset where the element $i$ occurs
$a_i$ times.

A {\it set partition} of a set $S$ is a set of pairwise disjoint subsets
$\{ S_1, S_2, \ldots, S_\ell\}$ such that $S_i \subseteq S$ for
$1 \leq i \leq \ell$ and $S_1 \cup S_2 \cup \cdots \cup S_\ell = S$.
A {\it multiset partition} $\pi = \dcl S_1, S_2, \ldots, S_\ell \dcr$
of a multiset $S$ is a similar construction to a
set partition, but now each $S_i$ may be
a multiset, and it is possible that two multisets
$S_i$ and $S_j$ (with $i\neq j$) have non-empty intersection (and may
even be equal).  The {\it length} of a multiset partition is denoted by
$\ell(\pi) = \ell$.  We will use the notation
$\pi \mvdash S$ to indicate that $\pi$ is a multiset
partition of the multiset $S$.

We will use $\mt(\pi)$ to represent the partition of the integer $\ell(\pi)$
consisting of the multiplicities of the multisets which
occur in $\pi$. For example,  $\mt(\dcl\dcl 1,1,2\dcr,\dcl 1,1,2\dcr,\dcl 1,3\dcr\dcr)
=(2,1)$ because $\dcl 1,1,2\dcr$ occurs 2 times and
$\dcl 1,3\dcr$ occurs 1 time.

For non-negative integers $n$ and $\ell$, a {\it composition} of size $n$
is an ordered sequence  of positive integers $\alpha = (\alpha_1, \al_2, \ldots, \al_\ell)$
such that
$\al_1+\al_2 + \cdots +\al_\ell=n$.  A {\it weak composition}
is such a sequence with the condition that $\al_i \geq 0$ (zeros are allowed).
To indicate that $\alpha$ is a composition of $n$ we will
use the notation
$\alpha \models n$ and to indicate that
$\alpha$ is a weak composition of $n$ we will use the notation
$\alpha \models_w n$.  For both
compositions and weak compositions, $\ell(\alpha) := \ell$ (the length of the sequence).

\begin{remark} Multiset partitions of a multiset are isomorphic to
objects called vector partitions
which have previously been used to
index symmetric functions in multiple sets of variables \cite{Comtet, MacMahon, Rosas}.
Since multiset partitions are
more amenable to tableaux we have used this alternate combinatorial
description.
\end{remark}

\begin{subsection}{The ring of symmetric functions.}
\label{subsec:ringsf}
For complete details on this topic 
we refer the reader to  \cite{Mac, Sagan, Stanley, Lascoux}.
The ring of symmetric functions will be denoted
$Sym = \QQ[p_1, p_2, p_3, \ldots]$.  The $p_k$ are
power sum generators and they will be thought of as
functions which can be evaluated at values when
appropriate by making the substitution
$p_k \rightarrow x_1^k + x_2^k + \ldots + x_n^k$
but they are used algebraically in this ring
without reference to their variables.

The fundamental bases of $Sym$ used in this paper (each indexed by the set of partitions $\lambda$) are
{\it power sum} $\{p_\la\}_{\la}$,
{\it homogeneous/complete} $\{h_\la\}_{\la}$,
{\it elementary} $\{e_\la\}_{\la}$, and
{\it Schur} $\{s_\la\}_{\la}$.
For consistent notation, $p_0 = h_0 = e_0 = 1$ in this ring.
The Hall inner product is defined by declaring that the
power sum basis is orthogonal, i.e.,
$$\left< p_\la, p_\mu \right>
= z_\la \delta_{\la=\mu},$$ where
the symbol $\delta_{\la=\mu}$ is the Kronecker delta function
that is equal to $1$ if $\la=\mu$ and $0$ otherwise.
Under this inner product the Schur functions are orthonormal,
$\left<s_\lambda,s_\mu \right>=\delta_{\lambda=\mu}$. We use this
scalar product to represent values of coefficients by taking scalar products with dual bases.
We will also refer to the irreducible character of the symmetric group indexed by the partition
$\la$ and evaluated at a permutation
of cycle structure $\mu$ as the coefficient
$\left< s_\la, p_\mu \right> = \chi^\la(\mu)$.

For $k > 0$, define
$$\Xi_k := 1, e^{2\pi i/k}, e^{4\pi i/k},
\ldots, e^{2(k-1)\pi i/k}$$
denote the set of eigenvalues of a permutation
matrix of a $k$-cycle.  Then for any partition
$\mu$, let
$$\Xi_\mu := \Xi_{\mu_1}, \Xi_{\mu_2},
\ldots, \Xi_{\mu_{\ell(\mu)}}$$
be the multiset of eigenvalues of a permutation matrix with
cycle structure $\mu$.  We will evaluate symmetric
functions at these eigenvalues.  The notation
$f[\Xi_\mu]$ represents the complex number we obtain when we take
$f \in Sym$ and replace $p_k$ in $f$ with
$x_1^k + x_2^k + \cdots + x_{|\mu|}^k$ and then
replacing the variables $x_i$ with the values in $\Xi_\mu$.
\end{subsection}
\end{section}

\begin{section}{Symmetric group character bases
of the symmetric functions}\label{sec:symgrpchar}

In this section we introduce two new (in-homogeneous) bases of the ring of symmetric functions,
$\{\hht_\la\}$ and $\{\st_\la\}$.  The evaluations of these
families of symmetric functions at roots of unity (the eigenvalues of a
permutation matrix) will be the values of characters of the symmetric group.
We have relegated a large part of the necessary buildup
of these symmetric function bases to two appendices in Section
\ref{sec:specializations} and \ref{sec:rootsofunity}.  The reason for this is that it will make more
clear the goals of this paper which are the introduction of these symmetric functions and the study of
their properties.   In Section \ref{sec:specializations}, we prove the following fundamental result.

\begin{theorem}\label{thm:hrts}
For all partitions $\nu$ and $\mu$, let $\HX_{\nu,\mu} :=
\left< h_{|\mu|-|\nu|} h_\nu, p_\mu \right>$.  We have
the evaluation,
\begin{equation}\label{eq:hlainhtbasis}
h_\lambda[\Xi_\mu] =
\sum_{\pi \mvdash \dcl1^{\la_1},2^{\la_2},\ldots,\ell^{\la_\ell}\dcr}
\HX_{\mt(\pi),\mu}~.
\end{equation}
\end{theorem}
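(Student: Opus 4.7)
The plan is to give combinatorial interpretations of both sides and match them by an explicit bijection.

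For the left-hand side, the starting point is the identity $\sum_n h_n t^n = \exp(\sum_k p_k t^k/k)$ together with the observation that the eigenvalues of a $d$-cycle are the $d$-th roots of unity, so that $p_k[\Xi_\mu] = \sum_{d\mid k} d\, m_d(\mu)$. A standard rearrangement yields
\[
\sum_{n\geq 0} h_n[\Xi_\mu]\, t^n = \prod_{i=1}^{\ell(\mu)} \frac{1}{1-t^{\mu_i}},
\]
so that $h_n[\Xi_\mu]$ counts tuples $(a_i)_{i \in [\ell(\mu)]} \in \ZZ_{\geq 0}^{\ell(\mu)}$ with $\sum_i a_i\mu_i = n$. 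Multiplying over the parts of $\lambda$, one sees that $h_\lambda[\Xi_\mu]$ counts non-negative integer matrices $A = (a_{ij})$ of shape $\ell(\mu)\times\ell(\lambda)$ satisfying the column constraints $\sum_i a_{ij}\mu_i = \lambda_j$ for each $j$.

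For the right-hand side, I would unpack $\HX_{\nu,\mu} = \langle h_{|\mu|-|\nu|} h_\nu, p_\mu\rangle$ using the coproduct $\Delta(p_k) = p_k \otimes 1 + 1 \otimes p_k$ iterated over the parts of $\mu$, combined with the elementary fact $\langle h_n, p_\rho\rangle = \delta_{n=|\rho|}$. This identifies
\[
\HX_{\nu,\mu} = \#\bigl\{(S_0,S_1,\ldots,S_{\ell(\nu)}) \text{ ordered set partition of }[\ell(\mu)] : \textstyle\sum_{i\in S_r}\mu_i = \nu_r \text{ for } r\geq 1\bigr\}.
\]

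The bijection, from matrices $A$ to pairs consisting of a multiset partition $\pi\mvdash \dcl 1^{\lambda_1},\ldots,\ell^{\lambda_\ell}\dcr$ together with an ordered set partition counted by $\HX_{\mt(\pi),\mu}$, goes as follows. Given $A$, form for each cycle $i$ the multiset $N_i = \dcl j^{a_{ij}} : j \in [\ell(\lambda)]\dcr$. Group the cycles by equality of $N_i$: for each distinct non-empty multiset $N$ that occurs, let $I_N = \{i : N_i = N\}$ and $w_N = \sum_{i\in I_N}\mu_i$, and set $I_0 = \{i : N_i = \emptyset\}$. Let $\pi$ consist of $w_N$ copies of each such $N$; the constraint $\sum_i a_{ij}\mu_i = \lambda_j$ translates to $\sum_N w_N m_j(N) = \lambda_j$, so $\pi$ is a multiset partition of the required multiset and $\mt(\pi)$ is the partition of the $w_N$'s. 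After fixing any canonical order on the distinct blocks $B^{(1)},\ldots,B^{(k)}$ of $\pi$, the sequence $(I_0, I_{B^{(1)}},\ldots,I_{B^{(k)}})$ is exactly one of the ordered set partitions enumerated by $\HX_{\mt(\pi),\mu}$; the inverse simply reads off $a_{ij} = m_j(B^{(r)})$ for $i\in S_r$.

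The main point to verify is the case where two distinct blocks of $\pi$ carry equal multiplicities: exchanging their cluster assignments produces a genuinely different matrix, because the rows lying in the two clusters receive distinct multisets $N_i$. This is precisely what makes the \emph{ordered} count $\HX_{\mt(\pi),\mu}$—not an unordered variant—the correct coefficient attached to each $\pi$. Once this bookkeeping is settled, summing the bijection over all $\pi$ delivers the identity of the theorem.
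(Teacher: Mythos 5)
Your proposal is correct and, at bottom, follows the same route as the paper: give combinatorial models for both sides of the identity and match them by clustering the cycles of $\mu$ according to the multiset of parts of $\lambda$ they serve, then reading off the multiset partition $\pi$ from the distinct clusters. The intermediate steps are mildly streamlined compared to the paper's — you derive the count for $\HX_{\nu,\mu}$ via the coproduct $\Delta(p_k)=p_k\otimes 1+1\otimes p_k$ rather than via the explicit $z_\mu$/multinomial-coefficient computation in Proposition~\ref{prop:evalht}, and you pass directly from matrices to pairs $(\pi,\text{ordered set partition})$ (the paper's $\bP_{\lambda,\mu}$) instead of detouring through the auxiliary set $\bT_{\lambda,\mu}$ of multiset-filled tableaux — but the resulting bijection is the same as the composite $\bC_{\lambda,\mu}\to\bT_{\lambda,\mu}\to\bP_{\lambda,\mu}$ in the paper's argument, and your remark about why the \emph{ordered} count in $\HX_{\mt(\pi),\mu}$ correctly handles blocks of $\pi$ with equal multiplicities is the right thing to check and is settled correctly.
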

\vskip .1in
Note that $\HX_{\nu,\mu} = 0$ if $|\mu|-|\nu|<0$.

\begin{definition}
We take as definition of symmetric function elements $\hht_\nu$
the equation
\begin{equation}\label{eq:definingrel}
h_\lambda =
\sum_{\pi \mvdash \dcl1^{\la_1},2^{\la_2},\ldots,\ell^{\la_\ell}\dcr}
\hht_{\mt(\pi)}.
\end{equation}
\end{definition}
This is a recursive definition for calculating this basis
since there is precisely one multiset partition
of $\dcl1^{\la_1},2^{\la_2},\ldots,\ell^{\la_\ell}\dcr$
such that $\mt(\pi)$ is of size $|\la|$, hence
\begin{equation}
\hht_\la = h_\la - \sum_{\substack{\pi \mvdash \dcl1^{\la_1},2^{\la_2},\ldots,\ell^{\la_\ell}\dcr\\\mt(\pi)\neq\la}}
\hht_{\mt(\pi)}~.
\end{equation}

Now by equation \eqref{eq:hlainhtbasis} and an induction
argument, we conclude that for all partitions $\mu$,
\begin{equation}
\hht_{\la}[\Xi_\mu] = \HX_{\la,\mu} = \left< h_{|\mu|-|\la|}h_{\la}, p_\mu\right>
\end{equation}
and this is the value of the character of the
trivial representation induced from
$S_{|\mu|-|\la|} \times S_{\la_1} \times S_{\la_2} \times
\cdots \times S_{\la_{\ell(\la)}}$ to the full symmetric group
$S_{|\mu|}$, i.e., $\mathbf{1}\uparrow_{S_{|\mu|-|\la|} \times S_{\la_1} \times S_{\la_2} \times
\cdots \times S_{\la_{\ell(\la)}}}^{S_{|\mu}}$.  We call these symmetric functions `characters'
because we can think of them as functions that can be
evaluated on the eigenvalues of permutation matrices and these
evaluations are equal to the character values of symmetric
group representations.  Therefore, we
call the symmetric functions $\{ \hht_\lambda\}$, as a
basis of the symmetric functions, the {\it induced trivial
character basis} (or $\hht$-basis).

\begin{example}
To compute a small example: $\hht_1 = h_1$.
Since $\dcl\dcl1\dcr,\dcl1\dcr\dcr$ and
$\dcl\dcl1,1\dcr\dcr$ are the two multiset partitions of $\dcl1,1\dcr$,
then $h_2 = \hht_1 + \hht_2$. Therefore $\hht_2 = h_2 - h_1$.
\end{example}

It is well known that two polynomials of degree
$d$ that agree on $d+1$ values are equal.
We use a multivariate formulation of this idea
repeatedly in order to justify some of our
symmetric function identities.  The details of this
proof are left to Appendix II
(Section \ref{sec:rootsofunity}).

\begin{prop} (Proposition \ref{prop:rootsimplsf} and
Corollary \ref{cor:evalsf})
Let $f,g \in Sym$ be symmetric functions
of degree less than or equal to some positive integer $n$.
Assume that $$f[\Xi_\ga] = g[\Xi_\ga]$$ for all partitions
$\ga$ such that $|\ga| \leq n$
(respectively, $|\ga|\geq n$), then
$$f=g$$
as elements of $Sym$.
\end{prop}

Next we define the symmetric functions $\st_\la$ by using the
Kostka coefficients (the change of basis coefficients between
the induced trivial characters $\mathbf{1}\uparrow_{S_{\mu_1}\times \cdots \times S_{\mu_\ell}}^{S_n}$
and the irreducible characters $\chi^\la$ are denoted $K_{\la\mu}$)
as the change of basis coefficients with $\hht_\la$ basis.
Choose an $n \geq 2|\mu|$.
We define $\st_\lambda$ to be the unique symmetric function which satisfy
\begin{equation}\label{eq:httost}
\hht_\mu = \sum_{|\la|\leq|\mu|} K_{(n-|\la|,\la)(n-|\mu|,\mu)} \st_\la~.
\end{equation}
Alternatively, the coefficient of $\st_\la$ in $\hht_\mu$
is equal to $\sum_{\ga} K_{\ga\mu}$, where the sum is over
partitions $\ga$ such that $\ga/\lambda$ is a horizontal strip
(at most one cell in each row) of size $|\mu|-|\la|$.
This also implies that we can express $\st_\la$ in
terms of the $\hht$'s as
\begin{equation}\label{eq:sttoht}
\st_\la = \sum_{|\mu|\leq|\la|}
K_{(n-|\la|,\la)(n-|\mu|,\mu)}^{-1} \hht_\mu ~, 
\end{equation}
where $n$ is any positive integer greater than
or equal to $2|\la|$ and
$K_{\la\mu}^{-1}$ are the inverse Kostka coefficients.
There is a combinatorial interpretation
for the Kostka coefficients $K_{\lambda\mu}$
as the number of column
strict tableaux of shape $\lambda$ and content $\mu$.
Using this interpretation we can show that $K_{(n-|\la|,\la)(n-|\mu|,\mu)}$ is independent of the value of $n$ as long as $n$ is sufficiently large. That is, if $n$ is sufficiently large, then for any $m\geq n$, $K_{(m-|\la|,\la)(m-|\mu|,\mu)}=K_{(n-|\la|,\la)(n-|\mu|,\mu)}$.

If $n$ is smaller than $|\la|-\la_1$, then the
change of basis coefficients are the same as those
between the complete symmetric functions and
a Schur function indexed by a composition
$\alpha = (|\mu|-|\la|,\la)$, namely
the expression representing the Jacobi-Trudi
matrix
\begin{equation}
s_\alpha = \det\left[ h_{\alpha_i + i - j}
\right]_{1 \leq i,j \leq \ell(\la)+1}~.
\end{equation}

Thus, it follows that the $\st_\la$ are the (unique) in-homogeneous symmetric
functions of degree $|\la|$ that evaluate to the irreducible characters of the symmetric group, that is
\begin{align}
\st_\la[\Xi_\ga]
&= \sum_{|\mu|\leq|\la|}
K_{(n-|\la|,\la)(n-|\mu|,\mu)}^{-1}
\hht_\mu[\Xi_\ga]\nonumber\\
&=
\sum_{|\mu|\leq|\la|}
K_{(n-|\la|,\la)(n-|\mu|,\mu)}^{-1}
\left< h_{n-|\mu|} h_\mu, p_\ga \right>\nonumber\\
&= \left<s_{(n-|\la|,\la)}, p_\ga\right> = \chi^{(n-|\la|,\la)}(\ga)~.\label{eq:chval}
\end{align}
If $n\geq|\lambda| + \lambda_1$ then this last expression
is equal to the value of the irreducible character of the symmetric group indexed by $(n-|\lambda|,\lambda)$ 
evaluated at an element of cycle type $\ga$.

This allows us to state a first characterization of
the symmetric functions $\st_\lambda$.
\begin{theore} (Part (1))
For a fixed partition $\lambda$, $\st_\la$ is
the unique symmetric function with the property that
for all $n \geq |\la|+\la_1$ and for all partitions $\gamma$ of $n$,
$$\st_\lambda(x_1, x_2, \ldots, x_n) = \chi^{(n-|\la|,\la)}(\ga),$$
where $x_1, x_2, \ldots, x_n$ are the eigenvalues of a permutation matrix of cycle structure 
$\ga$ and $\chi^{(n-|\la|,\la)}(\ga)$
are the values of the irreducible characters of the symmetric group.
\end{theore}

\begin{proof} By equation \eqref{eq:chval} we have
established that for all $n \geq |\la|+\la_1$ and
for any partition $\gamma$ of $n$,
\begin{equation*}
\st_\lambda[\Xi_\gamma] = \chi^{(n-|\la|,\la)}(\ga)~.
\end{equation*}
By Corollary \ref{cor:evalsf}, the only symmetric
function with this property must be equal to $\st_\lambda$.
\end{proof}

We call the basis $\{\st_\la\}$ the characters of the
irreducible representations of the symmetric group
when the symmetric group is realized as
permutation matrices. They are characters
in the same way that the Schur functions
are the characters of the irreducible representations of
the general linear group.  We therefore
name $\{\st_\lambda\}$ the {\it irreducible character
basis}.

An irreducible polynomial $GL_n$-module $W^\lambda$,
where $\lambda$ is a partition,  has character equal to
$s_\lambda(x_1, x_2, \ldots, x_n)$.  As we are considering
the embedding of $S_n$ as a subgroup of $GL_n$ we
may consider the decomposition of $W^\lambda$ into
irreducible $S_n$ modules when we restrict from
$GL_n$ to $S_n$.  The coefficients of the
expansion of $s_\lambda$ into irreducible character
basis establishes a second characterization of the
symmetric functions $\{\st_\la\}$.

\begin{theor} (Part (2)) The set $\{ \st_\lambda \}$ is
the unique family of symmetric functions such
that, for a sufficiently large $n$, if the multiplicity of
the $S_n$ module $S^{(n-|\mu|,\mu)}$ in the restriction
of the $GL_n$ module $W^\lambda$ to $S_n$ is $r_{\la\mu}$,
that is,
$$ W^\lambda\downarrow_{S_n}^{GL_n}
\simeq \bigoplus_{\mu}
(S^{(n-|\mu|,\mu)})^{\oplus r_{\la\mu}},$$
\hbox{ then }
$$s_\lambda = \sum_{\mu} r_{\la\mu} \st_{\mu}~.$$
\end{theor}

\begin{proof}
We first would like to show that $r_{\lambda\mu}$
is independent of $n$ for $n$ sufficiently large.
A theorem of Littlewood \cite{Lit, ScharfThibon} says
that $r_{\la\mu} = \left< s_\lambda,
s_{(n-|\mu|,\mu)}[1 + s_1+s_2+\cdots]\right>$.
The first consequence of this formula is that
$r_{\lambda\mu} = 0$ if $|\mu|>|\la|$ and $r_{\la\la} = 1$.

Next, choose $n>|\la|+\mu_1$.  Since
$$r_{\la\mu} = \sum_{r \geq 0}
\left< s_\lambda,
s_{(n-|\mu|,\mu)/(r)}[s_1+s_2+\cdots]\right>$$
then all the terms on the right hand side
are equal to $0$ unless
$r \geq n-|\lambda| > \mu_1$, otherwise the
degree of $s_{(n-|\mu|,\mu)/(r)}[s_1+s_2+\cdots]$ is
larger than the degree of $s_\lambda$.
If $r \geq n-|\la|\geq \mu_1$,
then $s_{(n-|\mu|,\mu)/(r)}[s_1+s_2+\cdots] =
s_d[s_1+s_2+ \cdots ] s_\mu[s_1+s_2+ \cdots]$
for some integer $d=n-r-|\mu|$.  We also have that
$$\left< s_\lambda,
s_d[s_1+s_2+ \cdots ] s_\mu[s_1+s_2+ \cdots]\right>=0$$
for $d>|\la|-|\mu|$, hence
$$r_{\la\mu} = \sum_{r \geq 0}
\left< s_\lambda,
s_{(n-|\mu|,\mu)/(r)}[s_1+s_2+\cdots]\right>
= \sum_{d=0}^{|\la|-|\mu|}
\left< s_\lambda,
s_d[s_1+s_2+ \cdots ] s_\mu[s_1+s_2+ \cdots]\right>~.$$

We conclude that the expression for $r_{\lambda\mu}$
is independent of $n$.  Hence for
$n$ sufficiently large and for all partitions
$\gamma$ such that $|\ga|\geq n$,
\begin{equation}
s_\la[\Xi_\ga] = \sum_{\mu} r_{\la\mu} \chi^\mu(\ga).
\end{equation}
By Theorem \ref{th:uniqueness} (part (1)), we know that this implies
$s_\la[\Xi_\ga] = \sum_{\mu} r_{\la\mu} \st_\mu[\Xi_\ga]$
and so by Corollary \ref{cor:evalsf} we
conclude that $s_\la = \sum_{\mu} r_{\la\mu} \st_\mu$.
These coefficients $r_{\la\mu}$ are then the
change of basis between the Schur functions and
the irreducible character basis.
\end{proof}

Recall that the Kronecker product is the bilinear
product on symmetric functions defined on the power sum basis by
\begin{equation}
\frac{p_\la}{z_\la} \ast \frac{p_\mu}{z_\mu} =
\delta_{\la=\mu} \frac{p_\la}{z_\la}~.
\end{equation}
The symbol $\delta_{\la=\mu}$ is the Kronecker delta function
that is equal to $1$ if $\la=\mu$ and $0$ otherwise.
Since $\left< \frac{p_\la}{z_\la}, p_\mu \right> = \delta_{\la=\mu}$,
we can verify the trivial calculation

\begin{equation}
\left< \frac{p_\la}{z_\la} \ast \frac{p_\mu}{z_\mu}, p_\ga \right>
= \delta_{\lambda=\mu} \delta_{\gamma=\lambda}
= \delta_{\la=\ga} \delta_{\mu=\ga}
= \left< \frac{p_\la}{z_\la} , p_\ga \right>
\left< \frac{p_\mu}{z_\mu}, p_\ga \right> ~.
\end{equation}
Since our product and scalar product
are bilinear, we have for any
symmetric functions $f$ and $g$,
\begin{equation}
\left< f \ast g, p_\ga \right>
= \left< f, p_\ga \right>
\left< g, p_\ga \right>~.
\end{equation}

By work of Murnaghan \cite{Murg2, Murg3}, for arbitrary
partitions $\la, \mu$ and $\nu$, there exists coefficients
$\gb_{\la\mu}^\nu$
with the property that
\begin{equation}
s_{(n-|\la|,\la)} \ast s_{(n-|\mu|,\mu)} = \sum_{\nu} \gb_{\la\mu}^\nu s_{(n-|\nu|,\nu)}
\end{equation}
for all sufficiently large $n$.
The $\gb_{\la\mu}^\nu$ are usually
referred to as `reduced' or `stable' Kronecker
coefficients (see for example \cite{EA, BOR, Gu, Klyachko}).

As we show next, when we write $\st_\lambda \st_\mu$ as a linear combination of the $\st_\nu$, 
we find the structure constants are the stable Kronecker coefficients.

\begin{theorem}  For partitions $\la$ and $\mu$,
\begin{equation}
\st_\la \st_\mu =
\sum_{|\nu| \leq |\la|+|\mu|} \gb_{\la\mu}^\nu \st_\nu~.
\end{equation}
\end{theorem}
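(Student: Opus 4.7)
The plan is to establish both identities by showing that after evaluation at $\Xi_\ga$ for every partition $\ga$, both sides agree, and then to upgrade this pointwise equality to an equality in $Sym$ using linear independence of characters. For the $\hht$-identity, fix $\ga$ and write $n = |\ga|$. Since $f \mapsto f[\Xi_\ga]$ is a ring homomorphism, $(\hht_\la \hht_\mu)[\Xi_\ga] = \hht_\la[\Xi_\ga] \cdot \hht_\mu[\Xi_\ga]$. Using the formula $\hht_\la[\Xi_\ga] = \langle h_{n-|\la|} h_\la, p_\ga\rangle$ proved earlier together with the multiplicativity $\langle f, p_\ga\rangle \langle g, p_\ga\rangle = \langle f \ast g, p_\ga\rangle$, I obtain
\[
(\hht_\la \hht_\mu)[\Xi_\ga] = \left\langle (h_{n-|\la|} h_\la) \ast (h_{n-|\mu|} h_\mu),\, p_\ga\right\rangle.
\]
Applying the Murnaghan stability relation \eqref{eq:stabhkron} with this value of $n$, the Kronecker product on the right expands as $\sum_\nu \db_{\la\mu}^\nu h_{n-|\nu|}h_\nu$, and only $\nu$ with $|\nu|\leq |\la|+|\mu|$ contribute. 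Pairing with $p_\ga$ yields $(\hht_\la \hht_\mu)[\Xi_\ga] = \sum_\nu \db_{\la\mu}^\nu \hht_\nu[\Xi_\ga]$.

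To deduce the identity in $Sym$, I form the difference $f := \hht_\la \hht_\mu - \sum_\nu \db_{\la\mu}^\nu \hht_\nu$ and expand it in the $\hht$-basis as $f = \sum_\nu c_\nu \hht_\nu$, a finite sum supported on partitions of bounded size. Choosing $n$ large enough that $(n-|\nu|,\nu)$ is an honest partition for every $\nu$ in the support, the evaluations $\hht_\nu[\Xi_\ga]$ for $|\ga|=n$ are precisely the character values of the induced trivial modules $\mathrm{Ind}_{S_{n-|\nu|}\times S_{\nu_1}\times\cdots}^{S_n}\mathbf{1}$. These induced characters are linearly independent, a fact that can be extracted from the unitriangularity of the Kostka matrix (each induced module decomposes into irreducibles with coefficients $K_{\gamma,(n-|\nu|,\nu)}$, and distinct $\nu$'s yield linearly independent decompositions). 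Since $f[\Xi_\ga]=0$ for every partition $\ga$, the coefficients $c_\nu$ must vanish, hence $f=0$ in $Sym$.

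The argument for the $\st$-identity is parallel. Using $\st_\la[\Xi_\ga] = \langle s_{(n-|\la|,\la)}, p_\ga\rangle$ and the Schur version of Murnaghan stability (the identity for the $\gb_{\la\mu}^\nu$), one derives $(\st_\la \st_\mu)[\Xi_\ga] = \sum_\nu \gb_{\la\mu}^\nu \st_\nu[\Xi_\ga]$ for every $\ga$; the passage to an identity in $Sym$ is then justified by the orthonormality, hence linear independence, of the irreducible characters $\chi^{(n-|\nu|,\nu)}$ for $n$ large. The main conceptual obstacle is precisely this final step: agreement on all evaluations $\Xi_\ga$ does not a priori force agreement in $Sym$, so the argument must explicitly invoke that the elements of the $\hht$- and $\st$-bases are realized as linearly independent character functions for sufficiently large $n$, which is what the constructions from Section~\ref{sec:symgrpchar} furnish.
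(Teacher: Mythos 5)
Your computation of the evaluation $(\hht_\la\hht_\mu)[\Xi_\ga]$ and $(\st_\la\st_\mu)[\Xi_\ga]$ mirrors the paper's proof exactly: pass to scalar products, invoke the multiplicativity of $\left<\cdot,p_\ga\right>$ under the Kronecker product, and apply Murnaghan's stability relations \eqref{eq:stabhkron} and its Schur analogue. Where you diverge from the paper is in the final step that upgrades the pointwise identity to an identity in $Sym$. The paper invokes Corollary~\ref{cor:evalsf}, which it proves in Appendix~II by translating the problem into the multivariate-polynomial setting (via $p_k \mapsto \sum_{d|k} d\,x_d$) and using the elementary fact that a polynomial of bounded degree vanishing on enough lattice points must be zero (Lemma~\ref{lem:polyzeros}). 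You instead expand the difference $f$ in the $\hht$-basis and appeal to the linear independence of the induced trivial characters $\{\chi^{\mathrm{Ind}\,\mathbf{1}}_{(n-|\nu|,\nu)}\}$ for a single sufficiently large $n$, deducing this linear independence from the unitriangularity of the Kostka matrix. Both closings are correct. The paper's route is more general---Corollary~\ref{cor:evalsf} handles arbitrary symmetric functions and is reused elsewhere in the paper---while yours is leaner and more intrinsically representation-theoretic, since it uses nothing beyond the linear independence of the $S_n$-class functions already in play. The only thing to flag is that your closing argument is really a special case of what Corollary~\ref{cor:evalsf} accomplishes: it is worth being aware that the paper isolates that step as a standalone result precisely so that both the $\hht$- and $\st$-identities (and several other results, e.g.~the proof of Theorem~\ref{thm:stexpansionofe} and Equation~\eqref{eq:Frobform}) can be settled uniformly without a separate linear-independence argument each time.
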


\begin{proof}
We begin by evaluating the product of these functions at
the eigenvalues of a permutation matrix.
\begin{align*}
\st_\la[\Xi_\ga] \st_\mu[\Xi_\ga] &=
\left< s_{(|\ga|-|\la|,\la)}, p_\ga \right>
\left< s_{(|\ga|-|\mu|,\mu)}, p_\ga \right>\\
&= \left< s_{(|\ga|-|\la|,\la)}\ast s_{(|\ga|-|\mu|,\mu)}, p_\ga \right>\\
&= \sum_{|\nu| \leq |\la|+|\mu|}
\gb_{\la\mu}^\nu \left< s_{(|\ga|-|\nu|,\nu)}, p_\ga \right>\\
&= \sum_{|\nu| \leq |\la|+|\mu|}
\gb_{\la\mu}^\nu \st_\nu[\Xi_\ga]~.
\end{align*}

Since this expression is an
identity for all $\ga$ of sufficiently large size, we conclude by
Corollary \ref{cor:evalsf} that the theorem holds.
\end{proof}

The symmetric group character basis are not the only
symmetric functions which will have the $\gb_{\la\mu}^\nu$
coefficients as their structure coefficients.  Indeed,
any algebra isomorphism applied to the irreducible character
basis will have the same coefficients in their product
expansion.  However, if we also specify what the
symmetric functions are equal to for a family of
generators then this determines the functions
uniquely.  In the following characterization we
choose to specify the elements $\st_{1^r}$ for $r \geq 1$
(since this is the most compact expression we could identify).

\begin{theo} (Part (3))
The set $\{ \st_\lambda \}$ is
the family of symmetric functions such
that for $r \geq 1$, $s_{1^r} = \st_{1^r} + \st_{1^{r-1}}$
(or, equivalently, $\st_{1^r} = \sum_{i=0}^r (-1)^i e_{r-i}$) and
\begin{equation}\label{eq:structure}
\st_\la \st_\mu = \sum_{\nu}
{\overline g}_{\la\mu}^\nu \st_\ga~.
\end{equation}
\end{theo}
\begin{proof}  Assume that $\st_\la$ is a family of symmetric
functions that satisfy equation \eqref{eq:structure} and
such that $s_{1^r} = \st_{1^r} + \st_{1^{r-1}}$.

It is known that the coefficients
${\overline g}_{\la\mu}^\nu = c_{\la\mu}^\nu$
for $|\la|+|\mu| = |\nu|$ (see for instance \cite{BOR,Lit}).  Let $\ga$ be the partition
$\lambda$ with the first column removed (that is
$\ga = (\la_1-1, \la_2-1,\ldots, \la_{\ell(\la)}-1)$).
It follows that $\st_{1^{\ell(\la)}} \st_\ga$ is equal
to $\st_\lambda$ plus other terms which are indexed by
partitions which are either larger than $\lambda$ in
dominance order or of smaller size than $|\lambda|$.
This implies that there is an order where $\st_\lambda$
is uni-triangularly related to the elements of the form
$\st_{1^{\la_1'}} \st_{1^{\la_2'}}
\cdots \st_{1^{\la_{\ell(\la)}'}}$
(where $\lambda'_i$ is the length of the
$i^{th}$ column of $\lambda$)
and hence $\st_\lambda$ are a linearly independent
set of elements which are determined by products of $\st_{1^r}$.
A choice of the
initial condition of the value
of $\st_{1^r}$ as an element of the
symmetric functions for $r \geq 0$
determines the embedding of the basis $\st_\lambda$
as elements of the symmetric functions.

The fact that $s_{1^r} = \st_{1^r} + \st_{1^{r-1}}$
follows by computing the character of $\bigwedge^r(V_n)$
(the $r^{th}$ exterior product of the $S_n$ permutation module $V_n$) both
as a $GL_n$ character and as a $S_n$ character.
This initial condition determines
the family of symmetric functions for all partitions.
\end{proof}

\begin{remark}
For this third characterization of the irreducible character
basis we could have equally defined it to be the basis
which satifies equation \eqref{eq:structure} and which
satisfies an initial condition on almost any set of
generators of the algebra of symmetric functions
(e.g. $\st_{r}$ for $r \geq 0$).  We chose to state
it in terms of $\st_{1^r}$ because the expression was
the most compact to state.
\end{remark}

\end{section}

\begin{section}{The irreducible character expansion of a complete symmetric function}\label{sec:handetost}
The main result of this section is an expansion of the complete homogeneous functions $h_\lambda$
in terms of the $\st$-basis.  The coefficients are described combinatorially by combining the notion 
of multiset partition of a multiset
and column strict tableau. 
To work with column strict tableaux
on sets or multisets we need to establish a total order on
these objects.  We remark that, with few restrictions, we can
do this with almost any total order and so we will
use lexicographic if we read the entries of the multiset in
increasing order.  This may mean that the tableaux we work
with will have content which is not a partition, but
this is typical with column strict tableaux.


If $\lambda = (\lambda_1, \lambda_2, \ldots, \lambda_{\ell(\lambda)})$ is a
partition, then use the
notation ${\overline \la} = (\la_2, \la_3,\ldots, \la_{\ell(\la)})$
to represent the partition with the first part removed.  For a tableau
$T$, let $shape(T)$ denote the partition of the outer shape of the tableau.

\begin{theorem} \label{thm:stexpofh}
For a partition $\mu$,
\begin{equation}
h_\mu = \sum_{T}
\st_{\overline {shape(T)}}
\end{equation}
the sum is over all skew-shape column strict tableaux of shape $\lambda/(\lambda_2)$
for some partition $\lambda$ where the cells are filled
with non-empty multisets of labels such that the total content of the
tableau is $\dcl1^{\mu_1},2^{\mu_2},\ldots,\ell^{\mu_\ell}\dcr$.
\end{theorem}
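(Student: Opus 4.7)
The plan is to combine the expansion of $h_\mu$ in the $\hht$-basis given by Equation \eqref{eq:definingrel} with the expansion of $\hht_\nu$ in the $\st$-basis given by Equation \eqref{eq:httost}, and then reinterpret the resulting double sum bijectively. First, Equation \eqref{eq:definingrel} gives $h_\mu=\sum_{\pi}\hht_{\mt(\pi)}$ with the sum over $\pi\mvdash\dcl 1^{\mu_1},\ldots,\ell^{\mu_\ell}\dcr$, and substituting \eqref{eq:httost} yields
\begin{equation*}
h_\mu=\sum_{\pi}\sum_{\rho}K_{(n-|\rho|,\rho)(n-\ell(\pi),\mt(\pi))}\st_\rho
\end{equation*}
for $n$ sufficiently large. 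Fix $\pi$, list its distinct multisets as $M_1<M_2<\cdots<M_r$ in lex order with $M_i$ occurring $a_i$ times, and use the invariance of Kostka coefficients under permutation of content to interpret $K_{(n-|\rho|,\rho)(n-\ell(\pi),\mt(\pi))}$ as the number of column-strict tableaux of shape $(n-|\rho|,\rho)$ in which a symbol $0$ appears $n-\ell(\pi)$ times and $M_i$ appears $a_i$ times.

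The key combinatorial step is to translate such a large-$n$ tableau into a tableau of shape $\la/(\la_2)$ as in the theorem. Column-strictness forces all $n-\ell(\pi)$ zeros into the leftmost positions of row $1$; after deleting them, the remaining entries occupy $\ell(\pi)-|\rho|$ cells at the right end of row $1$ (columns $n-\ell(\pi)+1,\ldots,n-|\rho|$) together with the cells of $\rho$ in rows $2,3,\ldots$. For $n$ large these row-$1$ cells lie at columns strictly greater than $\rho_1$, so they share no column with any cell in row $2$ and the column-strict condition decouples row $1$ from the remainder of the diagram. Define $\la=(\la_1,\rho_1,\rho_2,\ldots)$ with $\la_1=\ell(\pi)-|\rho|+\rho_1$; then the skew shape $\la/(\la_2)$ has exactly the same combinatorial structure, since its row-$1$ cells sit at columns $\la_2+1,\ldots,\la_1$ (to the right of $\la_2=\rho_1$) while its higher rows form the straight shape $\rho$. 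The column-strict fillings on the two shapes are therefore in obvious bijection, and $\overline{\la}=\rho$.

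Finally, a pair $(\pi,T)$ with $T$ a column-strict tableau of shape $\la/(\la_2)$ labeled by the $M_i$ with multiplicities $a_i$ is equivalent to a single column-strict tableau $T'$ of shape $\la/(\la_2)$ whose cells are filled with non-empty multisets with total content $\dcl 1^{\mu_1},\ldots,\ell^{\mu_\ell}\dcr$, since $\pi$ is recovered as the multiset of cell entries of $T'$. Summing over all such tableaux gives $h_\mu=\sum_{T'}\st_{\overline{shape(T')}}$ as claimed. The main obstacle is the decoupling argument of the middle paragraph: one must verify carefully that the column-strict condition on shape $(n-|\rho|,\rho)$ after deleting the zero prefix is literally the same as the column-strict condition for a filling of $\la/(\la_2)$, both of which have the row-$1$ cells disjoint in columns from the rest. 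Once this is in hand, the theorem follows by chaining \eqref{eq:definingrel} and \eqref{eq:httost} with this combinatorial repackaging.
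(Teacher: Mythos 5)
Your proof is correct and follows essentially the same route as the paper's: expand $h_\mu$ via \eqref{eq:definingrel} and then \eqref{eq:httost}, interpret the Kostka coefficient as a count of column-strict tableaux with content $(n-\ell(\pi),\mt(\pi))$, strip the block of smallest entries from the first row, and observe that for large $n$ the remaining row-one cells decouple in columns from the rows below, giving the skew shape $\lambda/(\lambda_2)$ with $\overline{\lambda}=\rho$. The "sliding" you describe is exactly the paper's bijection "by inserting or deleting $1$s in the first row," just spelled out a bit more explicitly.
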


\begin{proof}
From equation \eqref{eq:definingrel},
we know the expansion of $h_\mu$
in terms of multiset partitions of a multiset and by
\eqref{eq:httost} we know the expansion of $\hht_{\mt(\pi)}$ in
the $\st_\la$ basis terms of column strict tableaux.
Combining these two expansions
we have that for an $n$ sufficiently large,
\begin{equation}
h_\mu = \sum_{\pi \mvdash
\dcl1^{\mu_1},2^{\mu_2},\ldots,\ell^{\mu_\ell}\dcr}
\sum_{\la \vdash n}
K_{\la(n-\ell(\pi),\mt(\pi))} \st_{\overline \la}~.
\end{equation}
Now we note that for every multiset partition $\pi$
and column strict tableaux of shape $\lambda$
and content given by the partition
$(n-\ell(\pi),\mt(\pi))$ we can create a skew-shaped
tableau whose entries are multisets by replacing
the $n-\ell(\pi)$ labels with a $1$ with a blank
so that it is of skew shape $\lambda/(n-\ell(\pi))$
and the other labels by
their corresponding multiset in $\pi$.  The value of
$n$ needs to be chosen  so that
$n-\ell(\pi)$ is larger than the
size of the first part of $\overline \la$.

To explain why
this is equal to the description stated in the theorem where there
are precisely $\lambda_2$ blank cells in the first row, we note
that
$K_{\la(n-\ell(\pi),\mt(\pi))} = K_{(n'-|{\overline \la}|,{\overline \la})
(n'-\ell(\pi),\mt(\pi))}$ as long as $n'-\ell(\pi) \geq \la_2$.  This is because
there is a bijection between these two sets of tableaux by inserting or deleting $1$s 
in the first row of each tableau in the set.
In particular, we may choose $n' - \ell(\pi)= \la_2$ and the description
of the tableaux are precisely those that are column strict of
skew of shape $(n'-|{\overline \la}|,{\overline \la})/(\lambda_2)$
and whose entries are the multisets in $\pi$.
\end{proof}

\begin{example}\label{ex:htost}
Consider the following 20 column strict tableaux of content
$\dcl1^2,2\dcr$.

\begin{equation*}
\young{1&1&2\cr&&\cr}\hskip .25in
\young{2\cr1&1\cr&\cr}\hskip .25in
\young{1&1\cr&&2\cr}\hskip .25in
\young{1&2\cr&&1\cr}\hskip .25in
\young{11&2\cr&\cr}\hskip .25in
\young{1&12\cr&\cr}\hskip .25in
\young{2\cr1\cr&1\cr}\hskip .25in
\young{2\cr11\cr\cr}\hskip .25in
\young{12\cr1\cr\cr}\hskip .25in
\young{1\cr&1&2\cr}
\end{equation*}
\begin{equation*}
\young{2\cr&1&1\cr}\hskip .25in
\young{1\cr&12\cr}\hskip .25in
\young{2\cr&11\cr}\hskip .25in
\young{12\cr&1\cr}\hskip .25in
\young{11\cr&2\cr}\hskip .25in
\young{\hbox{\Tiny 112}\cr\cr}\hskip .25in
\young{1&1&2\cr}\hskip .25in
\young{1&12\cr}\hskip .25in
\young{11&2\cr}\hskip .25in
\young{\hbox{\Tiny 112}\cr}
\end{equation*}
Theorem \ref{thm:stexpofh} states then that
\begin{equation}
h_{21} = \st_3 + \st_{21} + 4\st_2 +3\st_{11} +7\st_1 +4\st_{()}~.
\end{equation}
\end{example}

\begin{example}\label{ex:settableaux}
Let us compute the decomposition of $V^{\otimes 4}$
where $V = \CC\{ x_1, x_2, x_3, \ldots ,x_n \}$ as an $S_n$ module
with the diagonal action.  The module $V$ has character equal to
$\hht_1 = h_1$.  Therefore to compute the decomposition of this
character into $S_n$ irreducibles we are looking for the expansion
of $h_{1^4}$ into the irreducible character basis.

Using Sage \cite{sage,sage-co} we compute that it is
\begin{align}
h_{1^4} = &15\st_{()} + 37\st_{1} + 31\st_{11} + 10\st_{111} + \st_{1111} + 31\st_{2}\\
&+ 20\st_{21} + 3\st_{211} + 2\st_{22} + 10\st_{3} + 3\st_{31} + \st_{4}~.
\nonumber
\end{align}

If $n\geq6$ then the multiplicity of the irreducible $(n-3,3)$ will be $10$.
The combinatorial interpretation of this value is the number of column strict
tableaux with entries that
are multisets (or in this case sets) of $\{1,2,3,4\}$ of skew-shape $(4,3)/(3)$
or $(3,3)/(3)$. Those tableaux are
\begin{equation*}
\young{1&2&3\cr&&&4\cr}\hskip .2in
\young{1&2&4\cr&&&3\cr}\hskip .2in
\young{1&3&4\cr&&&2\cr}\hskip .2in
\young{2&3&4\cr&&&1\cr}
\end{equation*}
\begin{equation*}
\young{14&2&3\cr&&\cr}\hskip .2in
\young{13&2&4\cr&&\cr}\hskip .2in
\young{12&3&4\cr&&\cr}\hskip .2in
\young{1&23&4\cr&&\cr}\hskip .2in
\young{1&24&3\cr&&\cr}\hskip .2in
\young{1&2&34\cr&&\cr}
\end{equation*}
What is interesting about this example is that the usual combinatorial interpretation
for the repeated Kronecker product $(\chi^{(n-1,1)}+\chi^{(n)})^{\ast k}$ is
stated in other places in the literature in terms of vascillating tableaux \cite{CG, HL, BHH, BH, BDE}.
Thus, this special case of Theorem \ref{thm:stexpofh} gives a new combinatorial description
of the multiplicities in terms of set valued tableaux.
\end{example}
\end{section}

\begin{section}{Character polynomials and the irreducible
character basis}\label{sec:stcharpoly}

Following the notation of \cite{GG},
a character polynomial is a multivariate
polynomial $q_\lambda(x_1, x_2, x_3, \ldots)$ in the variables
$x_i$ such that for integer values $x_i = m_i \in {\mathbb Z}$
with $m_i \geq 0$ and $n = \sum_{i \geq 1} i m_i$,
\begin{equation}
q_\lambda(m_1, m_2, m_3, \ldots)
= \chi^{(n-|\lambda|,\lambda)}(1^{m_1}2^{m_2}3^{m_3}\cdots)
\end{equation}
as long as $n$ is larger than or equal to $|\la|+\la_1$.

Character polynomials were first used
by Murnaghan \cite{Murg}. Much later, Specht
\cite{Sp} gave determinantal formulas and expressions in
terms of binomial coefficients for these polynomials.
They are treated as an example
in Macdonald's book \cite[ex. I.7.13 and I.7.14]{Mac}.
More recently, Garsia and Goupil \cite{GG}
gave an umbral formula for computing them.  We will
show in this section that character polynomials
are a transformation of character symmetric functions
and this will allow us to give an expression
for character symmetric functions in the power sum basis.

As a consequence
of Lemma \ref{lem:polyzeros} and Proposition
\ref{prop:rootsimplsf} we have the following
relationship between the character polynomials
$q_\lambda( x_1, x_2, \ldots)$
and character basis $\st_\la$.

\begin{prop} \label{prop:charpolyst} For $n\geq 0$ and
$\lambda \vdash n$,
$$q_\lambda(x_1, x_2, x_3, \ldots) = \st_\la \Big|_{p_k \rightarrow \sum_{d|k} d x_d}$$
and
$$\st_\la = q_\lambda(x_1, x_2, x_3, \ldots) \Big|_{x_k \rightarrow \frac{1}{k}\sum_{d|k} \mu(k/d) p_d }$$
where $\Big|_{a_i\rightarrow b_i}$ means that we are replacing $a_i$ with the expression $b_i$.
\end{prop}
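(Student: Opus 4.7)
The plan is to recognize the substitution $p_k \mapsto \sum_{d\mid k} d\, x_d$ as exactly the specialization $p_k \mapsto p_k[\Xi_\gamma]$ at the cycle type $\gamma = (1^{m_1} 2^{m_2} \cdots)$ when $x_d = m_d$, and then invoke the character interpretation of $\st_\la$ already established in Section \ref{sec:symgrpchar}.

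First I would verify the eigenvalue calculation $p_k[\Xi_d] = d$ if $d\mid k$ and $0$ otherwise, which follows directly from $p_k[\Xi_d] = \sum_{j=0}^{d-1} e^{2\pi i jk/d}$ together with the vanishing of a nontrivial sum of all $d$-th roots of unity. For a permutation with cycle type $\gamma$, the multiset $\Xi_\gamma$ is the disjoint union of $m_d(\gamma)$ copies of $\Xi_d$ for each $d$, so by additivity of $p_k$,
$$p_k[\Xi_\gamma] = \sum_{d\mid k} d\, m_d(\gamma).$$
Consequently, if $P_\la(x_1,x_2,\ldots)$ denotes the polynomial obtained from $\st_\la$ by the substitution $p_k \mapsto \sum_{d\mid k} d\, x_d$, then $P_\la(m_1,m_2,\ldots) = \st_\la[\Xi_\gamma]$ for every cycle type $\gamma = (1^{m_1} 2^{m_2} \cdots)$.

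Next I would combine this with the identity $\st_\la[\Xi_\gamma] = \chi^{(|\gamma|-|\la|,\la)}(\gamma)$ established earlier (valid once $|\gamma| \geq |\la| + \la_1$), and the defining property of the character polynomial $q_\la(m_1,m_2,\ldots) = \chi^{(n-|\la|,\la)}(1^{m_1}2^{m_2}\cdots)$. Since $\st_\la \in Sym = \QQ[p_1,p_2,\ldots]$ involves only finitely many power sums, both $P_\la$ and $q_\la$ lie in a common polynomial ring $\QQ[x_1,\ldots,x_N]$ for some $N$ depending on $\la$. They coincide on every tuple $(m_1,\ldots,m_N) \in \ZZ_{\geq 0}^N$ with $\sum_i i m_i \geq |\la| + \la_1$, whose complement in $\ZZ_{\geq 0}^N$ is finite and therefore forms a Zariski dense subset of $\CC^N$. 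The polynomial identity principle (which is presumably what Lemma \ref{lem:polyzeros} and Proposition \ref{prop:rootsimplsf} supply in the author's presentation) then forces $P_\la = q_\la$, yielding the first equation.

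The second equation is a formal consequence of Möbius inversion on the divisor lattice: viewing $p_k = \sum_{d\mid k} d\, x_d$ as a transform of arithmetic functions, Möbius inversion gives $k\, x_k = \sum_{d\mid k} \mu(k/d)\, p_d$, so the substitution $x_k \mapsto \tfrac{1}{k}\sum_{d\mid k} \mu(k/d)\, p_d$ is the inverse of $p_k \mapsto \sum_{d\mid k} d\, x_d$ as an endomorphism of the relevant polynomial ring, and applying this inverse to the first equation produces the second. The main potential obstacle is the clean application of the polynomial identity principle, in particular confirming that the finite set on which $P_\la$ and $q_\la$ are only proved to agree is Zariski dense in the affine space where both live; this is handled by the observation that removing finitely many points from $\ZZ_{\geq 0}^N$ leaves a Zariski dense set in $\CC^N$.
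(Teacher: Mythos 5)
Your argument is correct and uses the same underlying strategy the paper implicitly relies on: push $\st_\la$ through the substitution $p_k\mapsto\sum_{d\mid k}dx_d$, identify its values on nonnegative integer tuples with evaluations $\st_\la[\Xi_\gamma]$, match these against the defining values of $q_\la$ for $|\gamma|\geq|\la|+\la_1$, and invoke a polynomial-identity principle; the M\"obius-inversion step for the second equation is precisely the paper's Equation \eqref{eq:recoversf}. One small distinction from the paper worth noting: the paper's cited Lemma \ref{lem:polyzeros} is a ``small-box'' statement (a polynomial of weighted degree $\leq d$ vanishing on all lattice points with $\sum_i im_i\leq d$ is zero), whereas you instead use the complementary ``cofinite'' fact that a polynomial vanishing on all of $\ZZ_{\geq 0}^N$ minus a finite set must vanish; both are correct and immediately yield the result once one observes that $\{m\in\ZZ_{\geq 0}^N:\sum_i im_i<|\la|+\la_1\}$ is finite. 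Your gain is that you need only the genuine-character regime $|\gamma|\geq|\la|+\la_1$, which avoids having to check that $q_\la$ and $P_\la$ also agree on degenerate small cycle types.

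Two sentences in the write-up are garbled and should be fixed, though they do not affect the substance: the clause ``whose complement in $\ZZ_{\geq 0}^N$ is finite and therefore forms a Zariski dense subset of $\CC^N$'' reads as if the finite complement were the dense set, when in fact it is the cofinite set of agreement that is dense; and ``the finite set on which $P_\la$ and $q_\la$ are only proved to agree'' should say the \emph{cofinite} set of agreement. Your closing observation (removing finitely many points from $\ZZ_{\geq 0}^N$ leaves a Zariski dense subset of $\CC^N$) is the correct statement and repairs both slips, so I take this as a typo rather than a gap.
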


In \cite{GG}, the character polynomials are computed algorithmically.  If we make an additional substitution, i.e.,  $x_k$ by $\frac{1}{k} \sum_{d|k} \mu(k/d) p_d$, in their algorithm, then we obtain $\st_\la$ using the following steps.
\begin{enumerate}
\item Expand the Schur function $s_\lambda$ in the power sums basis
$s_\la = \sum_{\ga} \frac{\chi^\lambda(\ga)}{z_\ga} p_\ga$.
\item Replace each power sum $p_i$ by $i x_i -1$.
\item Expand each product $\prod_i (ix_i-1)^{a_i}$
as a sum $\sum_g c_g \prod_i x_i^{g_i}$.
\item Replace each $x_k^{g_k}$ by $(x_k)_{g_k} =
x_k(x_k-1)\cdots(x_k-g_k+1)$.
\item Replace each $x_k$ by $\frac{1}{k} \sum_{d|k} \mu(k/d) p_d$.
\end{enumerate}

\begin{example}  To compute $\st_3$ we follow the steps to obtain:
\begin{enumerate}
\item $s_3 = \frac{1}{6}(p_{1^3} + 3 p_{21} + 2 p_3)$
\item $\frac{1}{6}(p_{1^3} + 3 p_{21} + 2 p_3) \rightarrow
\frac{1}{6}((x_1-1)^3+3(2x_2-1)(x_1-1)+2(3x_3-1))$
\item $\frac{1}{6}((x_1-1)^3+3(2x_2-1)(x_1-1)+2(3x_3-1))=
\frac{1}{6}x_1^3-\frac{1}{2}x_1^2+x_1x_2-x_2+x_3$
\item $q_{3} = \frac{1}{6}(x_1)_3 - \frac{1}{2}(x_1)_2
+ x_1x_2 - x_2+ x_3$\label{step}
\item $\st_{3} = \frac{1}{6}(p_1)_3 - \frac{1}{2}(p_1)_2
+ p_1\frac{p_2-p_1}{2} - \frac{p_2-p_1}{2}+ \frac{p_3-p_1}{3}$
\end{enumerate}
\end{example}

As an important consequence, we derive a power sum expansion of the irreducible
character basis by following the algorithm stated above.

\begin{theorem} \label{thm:pexpansionofst} For $n\geq 0$ and
$\lambda \vdash n$,
\begin{equation}\label{eq:pexpansionofst}
\st_\lambda = \sum_{\gamma \vdash n}
\chi^{\lambda}(\gamma) \frac{\bfp_{\ga}}{z_\gamma}
\end{equation}
that is, the linear map defined by $\Gamma( s_\lambda ) = \st_\lambda$
has the property that $\Gamma(p_\ga) = \bfp_{\ga}$ where
\begin{equation}\label{eq:weirdp}
\bfp_{i^r} = \sum_{k=0}^r (-1)^{r-k} i^k \binom{r}{k}
\left(\frac{1}{i} \sum_{d|i} \mu(i/d) p_d \right)_k\hbox{ and }\bfp_{\gamma}
:= \prod_{i \geq 1} {\mathbf p}_{i^{m_i(\gamma)}}~,
\end{equation}
and $(x)_k$ denotes the $k$-th falling factorial.
\end{theorem}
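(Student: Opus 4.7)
The plan is to derive the formula by direct application of the five-step algorithm stated immediately after Proposition \ref{prop:charpolyst}. One starts with the standard Frobenius expansion
$$s_\lambda = \sum_{\gamma \vdash n} \frac{\chi^\lambda(\gamma)}{z_\gamma} p_\gamma,$$
and traces what happens to each term $p_\gamma/z_\gamma$ under steps (2)--(5). Because the scalar $\chi^\lambda(\gamma)/z_\gamma$ is untouched by the substitutions and the algorithm is linear, the theorem reduces to showing that $p_\gamma$ is sent to $\bfp_\gamma$.

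Next I would observe that the five-step algorithm factors across the variables indexed by $i = 1, 2, 3, \ldots$: step (2) replaces $p_i$ by $ix_i - 1$, which involves only the single variable $x_i$; step (3) expands the result as a linear combination of monomials $\prod_i x_i^{g_i}$; step (4) converts each such monomial to $\prod_i (x_i)_{g_i}$, which is a product across the different variables; and step (5) replaces each $x_k$ by a polynomial in the $p_d$ with $d \mid k$. Because the operations for distinct $i$ touch disjoint sets of variables, one obtains
$$p_\gamma = \prod_{i\geq 1} p_i^{m_i(\gamma)} \; \longmapsto \; \prod_{i\geq 1} T\!\left(p_i^{m_i(\gamma)}\right),$$
where $T$ denotes the composite operation applied inside a single variable.

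The core computation is then $T(p_i^r)$ for a single pair $(i,r)$. Starting from $p_i^r$, step (2) gives $(ix_i - 1)^r$; the binomial theorem in step (3) produces $\sum_{k=0}^r (-1)^{r-k} i^k \binom{r}{k} x_i^k$; step (4) converts this to $\sum_{k=0}^r (-1)^{r-k} i^k \binom{r}{k} (x_i)_k$; and step (5) yields precisely
$$\sum_{k=0}^{r} (-1)^{r-k} i^{k} \binom{r}{k} \left( \frac{1}{i}\sum_{d|i} \mu(i/d)\, p_d \right)_{\!k},$$
which is the definition of $\bfp_{i^r}$ given in \eqref{eq:weirdp}. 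Taking the product over $i$ yields $\bfp_\gamma$, and substituting back into the Frobenius expansion produces the claimed identity \eqref{eq:pexpansionofst}.

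The main point requiring care is not conceptual but bookkeeping: step (4) is not a ring homomorphism, so one must verify that it is compatible with the factorization of the expression across the variables $x_i$. This is justified by the fact that, after step (3), we have a genuine linear combination of pure monomials $\prod_i x_i^{g_i}$, and step (4) is defined monomial-by-monomial as $\prod_i x_i^{g_i} \mapsto \prod_i (x_i)_{g_i}$. Once this compatibility is noted, the rest of the proof is a straightforward unwinding of definitions, with the closed form for $\bfp_{i^r}$ emerging directly from the binomial expansion in step (3).
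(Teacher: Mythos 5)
Your proof follows exactly the same route as the paper: both trace the Garsia--Goupil five-step algorithm applied to the Frobenius expansion of $s_\lambda$, observe that the steps factor over the variables $p_i$, and identify $T(p_i^r)$ with $\bfp_{i^r}$ via the binomial expansion. Your version is if anything slightly more careful than the paper's, since you explicitly flag and justify the point that step (4) is not a ring homomorphism but is still compatible with the variable-by-variable factorization because it is defined monomial-by-monomial.
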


\begin{proof}
The proof of this proposition is exactly the steps outlined
in the result of \cite{GG} and then add the additional
step of replacing $x_i$ with $\frac{1}{i} \sum_{d|i} \mu(i/d) p_d$
in step \eqref{step}.
The Schur function has a power sum expansion given by
\begin{equation}
s_\lambda = \sum_{\ga \vdash |\la|} \chi^{\lambda}(\ga)\frac{p_\ga}{z_\ga}
= \sum_{\ga \vdash |\la|}
\chi^{\lambda}(\ga)\frac{1}{z_\ga} \prod_{i=1}^{\ell(\ga)} (p_i)^{m_i(\ga)}~.
\end{equation}
Then in the next step we replace $p_i$ with $i x_i -1$ and expand the expression.
The part of the expression $(p_i)^{r}$ becomes
\begin{equation} \label{eq:interstep}
(p_i)^{r}\Big|_{p_i \rightarrow ix_i-1} =
(i x_i -1)^{r} = \sum_{k=0}^r (-1)^{r-k} i^k \pchoose{r}{k} x_i^k~.
\end{equation}
In the last step the \cite{GG} algorithm replaces $x_i^k$ with $(x_i)_k$
and that is the expression for the character polynomial.  To recover the
irreducible character function $\st_\la$,
we use equation \eqref{eq:recoversf} and replace
$x_i$ with $\frac{1}{i} \sum_{d|i} \mu(i/d) p_d$. Replacing $x_i^k$ in \eqref{eq:interstep}
with $\left(\frac{1}{i} \sum_{d|i} \mu(i/d) p_d \right)_k$ means that
$(p_i)^r$ will be replaced by the expression in equation \eqref{eq:weirdp}.

It follows that the composition of these steps changes $s_\lambda$ to
$\st_\lambda$ and the power sum expansion of the Schur function to
the right hand side of equation \eqref{eq:pexpansionofst}.
\end{proof}

We also present a similar formula for the expansion of the
induced trivial character basis in the power sum basis.  To do this we
introduce a basis which acts like an indicator function for
evaluation at the roots of unity.  Define
\begin{equation}\label{eq:otherweirdp}
\bfpb_{i^r} = i^r
\left(\frac{1}{i} \sum_{d|i} \mu(i/d) p_d \right)_r\hbox{ and }\bfpb_{\gamma}
:= \prod_{i \geq 1} \bfpb_{i^{m_i(\gamma)}}~.
\end{equation}

\begin{lemma} \label{lem:evalowp}
For partitions $\ga$ and $\mu$ such that $|\mu|<|\ga|$,
then $\bfpb_\ga[\Xi_\mu] = 0$.  Moreover, if $|\mu|=|\ga|$, then
\begin{equation}
\bfpb_\ga[\Xi_\mu] = z_\ga \delta_{\ga=\mu}~.
\end{equation}
\end{lemma}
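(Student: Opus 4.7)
The plan is to compute $\bfpb_\gamma[\Xi_\mu]$ explicitly by first identifying the Möbius-inverted combination $x_i := \frac{1}{i}\sum_{d\mid i}\mu(i/d)p_d$ as a ``multiplicity indicator'' at $\Xi_\mu$, and then observe that falling factorials force the vanishing.

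First I would compute $p_d[\Xi_\mu]$. If $\sigma$ is a permutation matrix of cycle type $\mu$, then $p_d[\Xi_\mu]$ is the trace of $\sigma^d$, i.e.\ the number of fixed points of $\sigma^d$, which equals $\sum_{e\mid d} e\,m_e(\mu)$. Applying Möbius inversion to the identity $p_d[\Xi_\mu] = \sum_{e\mid d} e\,m_e(\mu)$ gives
\begin{equation*}
\Bigl(\tfrac{1}{i}\sum_{d\mid i}\mu(i/d)p_d\Bigr)\bigl[\Xi_\mu\bigr] \;=\; m_i(\mu),
\end{equation*}
the number of parts of $\mu$ equal to $i$. (This step is the ``Möbius/power-sum'' conversion that also underlies the replacement $x_k \leftrightarrow \frac{1}{k}\sum_{d\mid k}\mu(k/d)p_d$ used in Proposition \ref{prop:charpolyst}; I expect it to be the one small piece requiring care, though it is essentially standard.)

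Then, since evaluation at $\Xi_\mu$ is a ring homomorphism, it commutes with taking falling factorials of the image, so
\begin{equation*}
\bfpb_{i^r}[\Xi_\mu] \;=\; i^r \bigl(m_i(\mu)\bigr)_r,
\qquad
\bfpb_\gamma[\Xi_\mu] \;=\; \prod_{i\ge 1} i^{\,m_i(\gamma)}\bigl(m_i(\mu)\bigr)_{m_i(\gamma)}.
\end{equation*}

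To finish, I would analyze when this product vanishes. Each factor $\bigl(m_i(\mu)\bigr)_{m_i(\gamma)}$ is zero exactly when $m_i(\mu)<m_i(\gamma)$. Conversely, if $m_i(\mu)\ge m_i(\gamma)$ for every $i$, then $|\mu|=\sum_i i\,m_i(\mu)\ge \sum_i i\,m_i(\gamma)=|\gamma|$, with equality only if $\mu=\gamma$. Therefore $|\mu|<|\gamma|$ forces $m_i(\mu)<m_i(\gamma)$ for some $i$, giving $\bfpb_\gamma[\Xi_\mu]=0$; and $|\mu|=|\gamma|$ with $\mu\ne\gamma$ also forces some strict inequality, again giving $0$. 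Finally, when $\gamma=\mu$ we get
\begin{equation*}
\bfpb_\mu[\Xi_\mu] \;=\; \prod_{i\ge 1} i^{\,m_i(\mu)} m_i(\mu)! \;=\; z_\mu,
\end{equation*}
completing the proof. The only subtle step is the Möbius inversion identifying $x_i[\Xi_\mu]=m_i(\mu)$; everything else is a direct product manipulation.
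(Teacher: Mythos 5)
Your proof is correct and follows essentially the same route as the paper's: evaluate the Möbius-inverted power-sum combination at $\Xi_\mu$ to get $m_i(\mu)$, conclude $\bfpb_\gamma[\Xi_\mu] = \prod_i i^{m_i(\gamma)}(m_i(\mu))_{m_i(\gamma)}$, and observe that a falling factorial factor vanishes whenever $m_i(\mu) < m_i(\gamma)$, which must happen under either hypothesis. The added trace interpretation of $p_d[\Xi_\mu]$ and the explicit contrapositive phrasing are cosmetic; the argument is the same.
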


\begin{proof} Since $p_d[\Xi_\mu] = \sum_{d'|d} d' m_{d'}(\mu)$ and
\begin{equation}
\frac{1}{i} \sum_{d|i} \mu(i/d) \left( \sum_{d'|d} d' m_{d'}(\mu) \right) = m_i(\mu)~,
\end{equation}
then plugging into \eqref{eq:otherweirdp} we see
\begin{equation}
\bfpb_{i^r}[\Xi_\mu] = i^r ( m_i(\mu) )_r~.
\end{equation}
and hence $\bfpb_\ga[\Xi_\mu] = \prod_{i \geq 1} i^{m_i(\ga)} ( m_i(\mu) )_{m_i(\ga)}$.

Now if $|\mu|<|\ga|$ or $|\mu|=|\ga|$ and $\mu\neq\ga$,
then there exists at least one $i$ such that $m_i(\mu)<m_i(\ga)$
and for that value $i$, $( m_i(\mu) )_{m_i(\ga)}=0$ and hence $\bfpb_\ga[\Xi_\mu]=0$.

If $\mu = \ga$, then $\bfpb_\ga[\Xi_\ga] = \prod_{i \geq 1} i^{m_i(\ga)} ( m_i(\ga) )_{m_i(\ga)}
= \prod_{i \geq 1} i^{m_i(\ga)} m_i(\ga)! = z_\ga$.
\end{proof}

This basis can then be used to give a formula for the
induced trivial character basis.
\begin{prop} For $n\geq 0$ and $\la \vdash n$,
\begin{equation}\label{eq:hhtpowerexp}
\hht_\la = \sum_{\ga \vdash n} \left< h_\la, p_\ga \right> \frac{\bfpb_\ga}{z_\ga},
\end{equation}
that is, the linear map defined by $\Theta( h_\la) = \hht_\la$ has the property that
$\Theta(p_\ga) = \bfpb_\ga$.
\end{prop}
\begin{proof}
For any partition $\mu$ such that $|\mu|<|\la|$, then
$\hht_\la[\Xi_\mu]=\left< h_{|\mu|-|\la|} h_\la,
p_\mu \right> = 0$ because $|\mu|-|\la|<0$.
In addition, by Lemma \ref{lem:evalowp},
$\sum_{\ga \vdash |\la|} \left< h_\la, p_\ga \right> \frac{\bfpb_\ga[\Xi_\mu]}{z_\ga}=0$.
If $|\mu|=|\la|$, then $\left< h_{|\mu|-|\la|} h_\la, p_\mu \right>
= \left< h_\la, p_\mu \right>$ and
\begin{equation}
\sum_{\ga \vdash |\la|} \left< h_\la, p_\ga \right> \frac{\bfpb_\ga[\Xi_\mu]}{z_\ga}
= \left< h_\la, p_\mu \right> = \hht_\la[\Xi_\mu]~.
\end{equation}
We can conclude since we have equality for all evaluations at
$\Xi_\mu$ for $|\mu|\leq|\la|$, then
by Proposition \ref{prop:rootsimplsf}, Equation
\eqref{eq:hhtpowerexp} holds at the level of symmetric functions.
\end{proof}

\begin{remark}
After finding this formula for $\hht_\lambda$ in terms
of the elements $\bfpb_\ga$ we noticed that a similar
expression appears in Macdonald's book \cite{Mac} on page 121.
He defines polynomials for each partition $\rho$
in variables $a_1, a_2, \ldots$
as $\binom{a}{\rho} := \prod_{r \geq 1} \binom{a_r}{m_r(\rho)}$.
We noticed that if $a_i = \frac{1}{i} \sum_{d|i} \mu(i/d)p_d$
then $\binom{a}{\rho} = \frac{\bfpb_\rho}{z_\rho}$.  The
interested reader can translate Equation (4) from Example 14
on page 123 for the character polynomial to conclude that
\begin{equation}\label{eq:Macform1}
\st_\la = \sum_{\sigma,\rho} (-1)^{\ell(\sigma)}
\left< s_\la, p_\rho p_\sigma \right>
\frac{\bfpb_\rho}{z_\sigma z_\rho}
\end{equation}
summed over all partitions $\rho$ and $\sigma$ such that
$|\rho|+|\sigma|=|\lambda|$.  We can also translate Equation
(5) from the same example on page 124 to show that
\begin{equation}\label{eq:Macform2}
\st_\la = \sum_{\mu} (-1)^{|\la|-|\mu|} \sum_{\ga\vdash|\mu|} \left< s_\mu, p_\ga \right>
\frac{\bfpb_\ga}{z_\ga}
\end{equation}
where the outer sum is over partitions $\mu$ such that
$\la/\mu$ is a vertical strip (no more than one cell in each row of the
skew partition).
\end{remark}
\end{section}

\begin{section}{The partition algebra and a Murnaghan-Nakayama rule}
\label{sec:MNrule}

The partition algebra was independently defined in the work of Martin \cite{Ma1,Ma2,Ma3,Ma4} and Jones \cite{Jo}.
Jones showed that the partition algebra  is the centralizer algebra of the diagonal action of the
symmetric group on tensor space. In other words he described a Schur-Weyl duality between the symmetric
group and the partition algebra. Later,  Halverson \cite{Hal}
described the analogues of the Frobenius formula and the
Murnaghan-Nakayama rule to compute the characters of the partition algebra.  In this section we describe a
connection between the partition algebra characters and our irreducible character basis.

If $V$ is the $r$-dimensional defining representation of $S_r$, then the centralizer of the
diagonal action on $V^{\otimes n}$ depends on the two parameters $n$ and $r$ and is denoted
by $P_n(r)$.  The irreducible characters are indexed by partitions $(r - |\la|,\la)$, where $\la$
is a partition of size less than or equal to $n$. Halverson described conjugacy class analogues
and denoted the representatives of this classes by $d_\mu$, where $\mu$ is a partition of size
less than or equal to $n$. Using these notations, the irreducible partition algebra character
values are denoted by  $\chi_{P_n(r)}^{(r - |\la|,\la)}(d_\mu)$.

Corollary 4.2.3 of \cite{Hal} states the following properties
of the partition algebra characters.
\begin{cor} \label{cor:Hal1} If $|\la|\leq n$ and $\mu$ is a composition of size less than or equal to $n$, then
\begin{enumerate}
\item $\chi_{P_n(r)}^{(r - |\la|,\la)}(d_\mu) = \begin{cases}
r^{n-|\mu|}\chi_{P_{|\mu|}(r)}^{(r - |\la|,\la)}(d_\mu)
&\hbox{ if } |\mu|\geq|\la|,\\
0&\hbox{ if }|\mu|<|\la|
\end{cases}$
\item $|\mu|=|\la|=n$, then
$\chi_{P_n(r)}^{(r-|\la|,\la)}(d_\mu) =
\chi_{S_n}^{\la}(\mu)$
\item if $r \geq 2n$ and $|\mu|=n$, then
$\chi_{P_n(r)}^{(r - |\la|,\la)}(d_\mu)$ is independent
of $r$.
\end{enumerate}
\end{cor}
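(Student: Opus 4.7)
The plan is to use the Schur--Weyl duality between $P_n(r)$ and $S_r$ on $V^{\otimes n}$, where $V = \mathbb{C}^r$ is the permutation representation. The bimodule decomposition
$$V^{\otimes n} = \bigoplus_{|\la|\leq n} L^{(r-|\la|,\la)} \otimes S^{(r-|\la|,\la)}$$
yields, for any $d_\mu \in P_n(r)$ and any $\sigma \in S_r$, the master trace identity
$$\mathrm{tr}_{V^{\otimes n}}(d_\mu \otimes \sigma) = \sum_{|\la|\leq n} \chi_{P_n(r)}^{(r-|\la|,\la)}(d_\mu)\, \chi_{S_r}^{(r-|\la|,\la)}(\sigma).$$
Individual $P_n(r)$-characters can then be extracted by applying the orthogonality operator $\frac{1}{r!}\sum_\sigma \chi_{S_r}^{(r-|\la|,\la)}(\sigma^{-1})(\cdot)$ to both sides.

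The key first step is to compute the left-hand side directly from Halverson's realization of $d_\mu$ as a partition diagram with $\ell(\mu)$ propagating blocks encoding a permutation of cycle type $\mu$ on $|\mu|$ strands, together with $n-|\mu|$ pairs of singleton blocks on the remaining strands. On $V^{\otimes n}$ this operator decouples into a tensor-factor permutation on the propagating strands together with a rank-one operator proportional to $\mathbf{1}\mathbf{1}^{\top}$ on each singleton-pair strand; since $\sigma$ is a permutation matrix, $\mathbf{1}^{\top}\sigma\,\mathbf{1} = r$, so the trace factorizes as
$$\mathrm{tr}_{V^{\otimes n}}(d_\mu \otimes \sigma) = r^{n-|\mu|}\,\mathrm{tr}_{V^{\otimes |\mu|}}(d_\mu \otimes \sigma),$$
where $d_\mu$ on the right is now viewed as an element of $P_{|\mu|}(r)$.

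Given this factorization, part (1) follows on the two ranges. For $|\mu|\geq|\la|$, the multiplicative recursion is obtained by applying the orthogonality operator to the factorized identity. For $|\mu|<|\la|$, the vanishing follows because $\mathrm{tr}_{V^{\otimes |\mu|}}(d_\mu \otimes \sigma)$, as a class function of $\sigma$, lies in the span of the $S_r$-characters $\chi_{S_r}^{(r-|\nu|,\nu)}$ with $|\nu|\leq|\mu|$ (only these irreducibles appear in $V^{\otimes |\mu|}$), so its Hall pairing with $\chi_{S_r}^{(r-|\la|,\la)}$ is zero. For part (2), when $|\mu|=|\la|=n$ the element $d_\mu$ has no singleton blocks and reduces to a permutation $\pi_\mu \in S_n\subset P_n(r)$; the classical Frobenius formula gives $\mathrm{tr}(\pi_\mu \otimes \sigma, V^{\otimes n}) = \prod_k (p_1(\Xi_{\sigma^k}))^{m_k(\mu)}$, and orthogonality with $\chi_{S_r}^{(r-|\la|,\la)}$ recovers $\chi_{S_n}^{\la}(\mu)$ via the standard Schur--power-sum pairing. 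For part (3), stability in $r$ is inherited from the well-known stabilization of $\chi_{S_r}^{(r-|\la|,\la)}(\nu,1^{r-|\nu|})$ for $r \geq |\la|+\la_1$ (the same principle underlying the reduced Kronecker coefficients used elsewhere in the paper), which is implied by $r \geq 2n \geq 2|\la|$.

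The main obstacle is carrying out the trace factorization cleanly, which requires aligning Halverson's convention for the partition diagram $d_\mu$ with the precise scaling of the rank-one singleton-block operator on $V$; the identity $\mathbf{1}^{\top}\sigma\,\mathbf{1}=r$ is the source of the $r^{n-|\mu|}$ factor and must be reconciled with any normalization in Halverson's definition so that no hidden scalar is dropped. Once this alignment is made, all three parts unwind routinely from the master identity and $S_r$-character orthogonality.
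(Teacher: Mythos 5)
The paper does not prove this statement: it is quoted verbatim as ``Corollary 4.2.3 of \cite{Hal},'' and the authors use it as a black box input to derive Equation \eqref{eq:Frobform} and Theorem \ref{thm:HalstMNrule}. So there is no internal proof to compare against.

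That said, your sketch is a reasonable outline of how Halverson's result is actually established, and the core mechanism is right. The master trace identity from the $(P_n(r),S_r)$-bimodule decomposition, the factorization of $\mathrm{tr}_{V^{\otimes n}}(d_\mu\otimes\sigma)$ across tensor slots with each non-propagating strand contributing $\mathrm{tr}(J\sigma)=\mathbf{1}^{\top}\sigma\mathbf{1}=r$ (valid because the all-ones matrix $J$ is exactly the image of the two-singleton diagram generator in Halverson's normalization, with no hidden scalar), the span argument for the vanishing range $|\mu|<|\la|$, and the reduction of part (2) to the classical Frobenius formula and orthogonality are all correct in spirit and match the structure of Halverson's argument.

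One place you should tighten is part (3). You attribute the $r$-independence to ``stabilization of $\chi_{S_r}^{(r-|\la|,\la)}(\nu,1^{r-|\nu|})$ for $r\geq|\la|+\la_1$,'' but these values are emphatically \emph{not} $r$-independent (for instance $\chi^{(r-1,1)}(1^r)=r-1$). The correct reason is different: the identity $p_\mu[\Xi_\tau]=\sum_{|\la|\leq n}\chi_{P_n(r)}^{(r-|\la|,\la)}(d_\mu)\,\chi^{(r-|\la|,\la)}(\tau)$ holds for all $\tau\vdash r$, and both $p_\mu[\Xi_\tau]$ and the characters $\chi^{(r-|\la|,\la)}(\tau)$ are, via the substitution $p_k\mapsto\sum_{d\mid k}d\,m_d(\tau)$, evaluations of polynomials in $m_1,m_2,\ldots$ that are themselves independent of $r$ (the latter being exactly the character polynomials $q_\la$ of Section \ref{sec:stcharpoly}). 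Once $r\geq 2n$, the system determines the coefficients $\chi_{P_n(r)}^{(r-|\la|,\la)}(d_\mu)$ uniquely as the expansion coefficients of an $r$-independent polynomial in the $r$-independent basis $\{q_\la\}_{|\la|\leq n}$, which gives the stability. This is precisely the perspective the paper develops via Proposition \ref{prop:charpolyst} and Lemma \ref{lem:polyzeros}, so if you want a self-contained proof of (3) in the language of this paper, that is the mechanism to invoke rather than any stabilization of individual $S_r$-character values.
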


For a positive integer $n$ and
$\mu \vdash n$, the usual Frobenius formula for the symmetric functions is a consequence of the classical Schur-Weyl duality between $S_n$ and $GL_r$, it states
\begin{equation}
p_\mu = \sum_{\lambda \vdash n}
\chi^{\lambda}_{S_n}(\mu) s_\lambda
\end{equation}
where $s_\lambda$ (as symmetric functions)
are the irreducible $GL_r$ characters.

If we restrict the diagonal action of $GL_r$ to the symmetric group, $S_r$, realized by the permutation matrices,
we obtain the Schur-Weyl duality between the symmetric group $S_r$ and the partition algebra $P_n(r)$.
A decomposition of $V^{\otimes n}$ as a $(P_n(r), S_r)$-module into irreducibles yields the analogue of the
Frobenius formula for $P_n(r)$ and the symmetric group.  See  equation (3.2.1) and Theorem 3.2.2
of \cite{Hal} where we assume that $r\geq 2|\mu|$, and $\ga \vdash r$,
\begin{equation}
p_\mu[\Xi_\gamma] = \sum_{|\lambda| \leq |\mu|}
\chi^{(r-|\lambda|,\lambda)}_{P_{|\mu|}(r)}(d_\mu)
\st_\lambda[\Xi_\gamma]~.
\end{equation}

Since this identity hold for all $r$ greater than a fixed value and all
partitions $\gamma \vdash r$,
then by Corollary \ref{cor:evalsf}, this expression is a symmetric
function identity and we have
\begin{equation} \label{eq:Frobform}
p_\mu = \sum_{|\lambda| \leq |\mu|}
\chi^{(r-|\lambda|,\lambda)}_{P_{|\mu|}(r)}(d_\mu)
\st_\lambda~.
\end{equation}

Our next result is a statement which is equivalent to the
Murnaghan-Nakayama rule for the computation of the
irreducible symmetric group characters.  To state how this
relation appears in the irreducible character basis, we first introduce a little notation.

For partitions $\lambda$ and $\nu$ such that $\nu \subseteq \lambda$,
we will say that $\lambda$ differs from $\nu$ by a
$k$ border strip (abbreviated $\la/\nu \in B_k$) if the skew partition $\la/\nu$
consists of $k$ cells which are connected and do not contain a
$2\times 2$ sub-configuration of cells.  When we think of these partitions
as having an extra long row, we will write $\la/\!\!/\nu \in B_k$ if
$\la/\nu \in B_k$ or if $\la=\nu$ (in which case we think of
the skew partition as $(r,\lambda)/(r-k,\lambda)$ for a sufficiently large $r$).
Let $ht(\la/\!\!/\nu)$ equal to the number of rows occupied by the
skew partition minus 1 and, in particular, $ht(\la/\!\!/\la)=0$.

\begin{lemma} \label{lem:stdualMNrule} For $n,k>0$ and $\la \vdash n$,
let $\mu \vdash 2n+k$.
\begin{equation}
\st_\la[\Xi_{(k,\mu)}] =
\sum_{\nu : \la/\!\!/\nu \in B_k}
(-1)^{ht(\la/\!\!/\nu)}\st_\nu[\Xi_\mu]
= \st_\la[\Xi_\mu]
+ \sum_{\nu : \la/\nu \in B_k}
(-1)^{ht(\la/\nu)}\st_\nu[\Xi_\mu]~.
\end{equation}
\end{lemma}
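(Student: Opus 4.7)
The plan is to reduce both sides to ordinary symmetric-group characters and then apply the classical Murnaghan--Nakayama rule to the ``augmented'' partition $(n+2k,\la)$. Using the identification $\st_\la[\Xi_\gamma]=\chi^{(|\gamma|-|\la|,\la)}(\gamma)$ established earlier in the paper (valid once $|\gamma|\geq|\la|+\la_1$, which the size hypotheses on $\mu$ readily guarantee for every $\nu\subseteq\la$ appearing on the right), the left-hand side rewrites as $\chi^{(n+2k,\la)}((k,\mu))$ and each summand on the right as $\chi^{(|\mu|-|\nu|,\nu)}(\mu)=\st_\nu[\Xi_\mu]$. The classical Murnaghan--Nakayama rule then supplies
\begin{equation*}
\chi^{(n+2k,\la)}((k,\mu)) = \sum_{\eta}(-1)^{ht((n+2k,\la)/\eta)}\chi^{\eta}(\mu),
\end{equation*}
where $\eta$ ranges over partitions of $2n+k$ such that $(n+2k,\la)/\eta\in B_k$. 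What remains is to classify these $k$-border strips of $(n+2k,\la)$.

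The key geometric observation is that the first row of $(n+2k,\la)$ is too long for any $k$-border strip to span both row $1$ and a lower row. Suppose $(n+2k,\la)/\eta\in B_k$ contains a cell in some row $i\geq 2$ together with a cell in row $1$. Connectivity of the strip forces a path through row $2$, and each step between adjacent rows must share a column; hence some column $j\leq \la_1$ carries cells of the strip in both rows $1$ and $2$. This gives $\eta_1<\la_1\leq n$, so the row-$1$ portion of the strip alone contains $n+2k-\eta_1\geq 2k+1>k$ cells, contradicting the total size $k$. Consequently each $k$-strip of $(n+2k,\la)$ lies either entirely in row $1$---necessarily as the last $k$ cells, producing $\eta=(n+k,\la)$ with height $0$---or entirely in rows $\geq 2$, in which case it coincides with a $k$-border strip of $\la$ shifted down one row, so $\eta=(n+2k,\nu)$ for a unique $\nu$ with $\la/\nu\in B_k$, and $ht((n+2k,\la)/\eta)=ht(\la/\nu)$.

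Matching these two types against the right-hand side is then bookkeeping. In the first case $\eta=(n+k,\la)=(|\mu|-|\la|,\la)$ contributes $+\st_\la[\Xi_\mu]$, corresponding to the index $\nu=\la$ for which $\la/\!\!/\la$ is the horizontal $k$-strip at the end of the first row. In the second case $|\nu|=n-k$ and $\eta=(n+2k,\nu)=(|\mu|-|\nu|,\nu)$ contributes $(-1)^{ht(\la/\nu)}\st_\nu[\Xi_\mu]$, and the skew $\la/\!\!/\nu$ coincides (up to a row shift) with $\la/\nu$. Both cases are captured uniformly by $\la/\!\!/\nu\in B_k$, giving the first equality in the lemma, and separating out the $\nu=\la$ term yields the second. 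The main obstacle is the classification in the middle paragraph---ruling out ``ribbons'' that drop from row $1$ into $\la$---but the strict inequality $2k+1>k$ arising from $\la_1\leq n$ and $k\geq 1$ makes this essentially automatic.
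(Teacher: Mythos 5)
Your proposal is correct and follows essentially the same route as the paper's proof: both strip a $k$-cycle from $(k,\mu)$ via the (dual) Murnaghan--Nakayama rule applied to the augmented Schur function $s_{(|\mu|+k-|\la|,\la)}$, then classify the $k$-border strips of the augmented shape according to whether they touch the long first row or not. You phrase the stripping step in character language ($\chi^{(n+2k,\la)}((k,\mu))$) rather than the paper's scalar-product formulation $\langle p_k^\perp s_{(|\mu|+k-|\la|,\la)}, p_\mu\rangle$, and you spell out more explicitly why a $k$-strip cannot cross from row $1$ into the rows of $\la$ --- a point the paper disposes of in one sentence ("lies only in the first row because $|\mu|>2|\nu|$") --- but the substance is the same.
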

\begin{proof}
Recall that the Murnaghan-Nakayama rule says that
\begin{equation}
p_k s_\lambda = \sum_{\nu/\la \in B_k} (-1)^{ht(\nu/\la)}
s_\nu
\end{equation}
and similarly by duality,
\begin{equation}
p_k^\perp s_\lambda = \sum_{\la/\nu \in B_k}(-1)^{ht(\la/\nu)} s_\nu~.
\end{equation}
where $p_k^\perp$ denotes the adjoint to $p_k$ with respect to the inner product.

Next we calculate by translating the evaluation of
$\st_\la[\Xi_{(k,\mu)}]$ to a symmetric function scalar product.
\begin{align}
\st_\la[\Xi_{(k,\mu)}] &= \chi^{(|\mu|+k-|\la|,\la)}(k,\mu)\nonumber\\
&=\left< s_{(|\mu|+k-|\la|,\la)}, p_\mu p_k \right>\nonumber\\
&=\left< p_k^\perp s_{(|\mu|+k-|\la|,\la)}, p_\mu \right>\nonumber\\
&=\sum_{\nu : \la/\!\!/\nu \in B_k}
(-1)^{ht(\la/\!\!/\nu)}
\left< s_{(|\mu|-|\nu|,\nu)}, p_\mu \right>\nonumber\\
&=\sum_{\nu : \la/\!\!/\nu \in B_k}
(-1)^{ht(\la/\!\!/\nu)}\st_\nu[\Xi_\mu]~.\label{eq:stMNrule}
\end{align}

Now to complete the statement of the lemma, we note that
a $k$-border strip that starts in the first row of $(|\mu|+k-|\nu|,\nu)$
lies only in the first row because we assume that $|\mu|>2|\nu|.$
Hence one of the terms where $\la/\!\!/\nu$ is a $k$ border strip is
$\nu = \la$.  All the others partitions such that
$\la/\!\!/\nu$ is a $k$ border strip will have
the $k$ border strip start in the second row or higher and
in this case $\la/\nu$ will be a $k$-border strip.  Therefore
equation \eqref{eq:stMNrule} is equal to
\begin{equation}
= \st_\la[\Xi_\mu]
+ \sum_{\nu : \la/\nu \in B_k}
(-1)^{ht(\la/\nu)}\st_\nu[\Xi_\mu]~. \qedhere
\end{equation}
\end{proof}

Let $\la$ and $\nu$ be partitions
and choose $r>max(|\la|+\la_1, |\nu|+\nu_1)$.
Then $(r-|\la|,\la)$ and $(r-|\nu|,\nu)$ are partitions
and we may compute
\begin{align} \label{eq:orth}
\sum_{\mu \vdash r} \frac{1}{z_\mu} \st_\la[\Xi_\mu]
\st_\nu[\Xi_\mu]
&= \sum_{\mu \vdash r} \frac{1}{z_\mu}
\chi^{(r-|\la|,\la)}(\mu)
\chi^{(r-|\nu|,\nu)}(\mu)\nonumber\\
&=\frac{1}{r!} \sum_{\sigma \in S_r}
\chi^{(r-|\la|,\la)}(\sigma)
\chi^{(r-|\nu|,\nu)}(\sigma)~.
\end{align}
By the orthogonality
of symmetric group characters, this sum is equal to $1$ if $\la = \nu$
and $0$ otherwise.

This computation can be used to
obtain a single coefficient of an $\st_\lambda$ in
a symmetric function expression $f$
which we rephrase as the following lemma.

\begin{lemma} \label{lem:takecoeff}
Let $r$ be a positive integer such that
$r>2 deg(f)$. The coefficient of $\st_\lambda$ in $f$ is equal to
$\sum_{\mu \vdash r}
\frac{1}{z_\mu} \st_\la[\Xi_\mu]
f[\Xi_\mu]$~.
\end{lemma}

\begin{proof}
If $f = \sum_{\la} c_\la \st_\la$,
then for all $\la$ in the support of $f$, $(r-|\la|,\la)$
will always be a partition.  Therefore
since Equation \eqref{eq:orth} is equal to $1$
if $\lambda = \nu$ and $0$, otherwise, then by linearity
$\sum_{\mu \vdash r}
\frac{1}{z_\mu} \st_\la[\Xi_\mu]
f[\Xi_\mu] = c_\lambda$.
\end{proof}

 We will apply this lemma to obtain
the following expression which is equivalent
to Halverson's \cite{Hal} Murnaghan-Nakayama
rule for partition algebra characters.

\begin{theorem}
\label{thm:HalstMNrule}
For $k>0$ and a partition $\lambda$,
\begin{equation}
p_k \st_\lambda =
\sum_{\nu} \left(\sum_{d|k}
\sum_{\al }
(-1)^{ht(\la/\!\!/\al)+ht(\nu/\!\!/\al)} \right) \st_\nu
\end{equation}
where the inner sum is over all partitions
$\alpha$ such that
both $\la/\!\!/\al$ and $\nu/\!\!/\al$ are border strips
of size $d$ and the outer sum is over all partitions $\nu$
of size less than or equal to $k+|\lambda|$.
\end{theorem}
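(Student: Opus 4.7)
The plan is to extract the coefficient $c_\nu$ of $\st_\nu$ in $p_k\st_\la$ using the orthogonality principle recorded just before the theorem statement. Fix $r$ large compared with $|\la|+\la_1+|\nu|+\nu_1+k$, so that every partition of the form $(r-|\al|,\al)$ that appears is genuine and character orthogonality for $S_{r-d}$ is available. Then
\[
c_\nu \;=\; \sum_{\mu\vdash r}\frac{1}{z_\mu}\,p_k[\Xi_\mu]\,\st_\la[\Xi_\mu]\,\st_\nu[\Xi_\mu].
\]

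The first move is to replace $p_k[\Xi_\mu]$ by its closed form. The classical identity $p_k[\Xi_\mu]=\sum_{d\mid k} d\,m_d(\mu)$ (only divisors of $k$ survive, since an $n$-th root of unity raised to the $k$-th power sums to $n$ if $n\mid k$ and to $0$ otherwise) gives
\[
c_\nu\;=\;\sum_{d\mid k} d\sum_{\mu\vdash r}\frac{m_d(\mu)}{z_\mu}\,\st_\la[\Xi_\mu]\,\st_\nu[\Xi_\mu].
\]
I would then reindex using the bijection between partitions $\mu\vdash r$ with $m_d(\mu)\geq 1$ and partitions $\mu'\vdash r-d$ obtained by removing one part equal to $d$. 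Since $z_\mu=d\,m_d(\mu)\,z_{\mu'}$ directly from the definition of $z_\mu$, we have $m_d(\mu)/z_\mu=1/(d\,z_{\mu'})$, and after cancelling the factor $d$:
\[
c_\nu\;=\;\sum_{d\mid k}\sum_{\mu'\vdash r-d}\frac{1}{z_{\mu'}}\,\st_\la[\Xi_{(d,\mu')}]\,\st_\nu[\Xi_{(d,\mu')}].
\]

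Next I would invoke Lemma \ref{lem:stdualMNrule} twice, expanding both $\st_\la[\Xi_{(d,\mu')}]$ and $\st_\nu[\Xi_{(d,\mu')}]$ as signed sums of $\st_\al[\Xi_{\mu'}]$ (respectively $\st_\be[\Xi_{\mu'}]$) over $\al$ (respectively $\be$) with $\la/\!\!/\al\in B_d$ (respectively $\nu/\!\!/\be\in B_d$). After swapping the order of summation, the inner sum becomes
\[
\sum_{\mu'\vdash r-d}\frac{1}{z_{\mu'}}\chi^{(r-d-|\al|,\al)}(\mu')\chi^{(r-d-|\be|,\be)}(\mu')\;=\;\delta_{\al=\be},
\]
by the orthogonality of irreducible characters of $S_{r-d}$ (both indexing partitions are valid because $r$ was chosen large). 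This collapses the $(\al,\be)$-double sum to a single sum over $\al$ with both $\la/\!\!/\al$ and $\nu/\!\!/\al$ in $B_d$, and the signs combine to $(-1)^{ht(\la/\!\!/\al)+ht(\nu/\!\!/\al)}$, recovering the claimed formula.

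The argument is essentially bookkeeping once Lemma \ref{lem:stdualMNrule} and $S_{r-d}$ character orthogonality are in hand; the only real point to watch is the stability regime. One must check that $r$ is large enough that every evaluation $\st_\gamma[\Xi_\omega]$ that appears genuinely equals the irreducible character $\chi^{(|\omega|-|\gamma|,\gamma)}(\omega)$, rather than a degenerate Jacobi--Trudi expression. Since $r$ may be chosen as large as we please and the final coefficient is visibly independent of $r$, this is not a serious obstacle, but it is the one step that requires explicit justification.
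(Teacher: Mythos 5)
Your proposal is correct and is essentially identical to the paper's own proof: same orthogonality extraction of $c_\nu$, same use of $p_k[\Xi_\mu]=\sum_{d\mid k} d\,m_d(\mu)$, same reindexing $\mu\mapsto\mu'=\mu\setminus(d)$ with $z_\mu = d\,m_d(\mu)z_{\mu'}$, same double application of Lemma~\ref{lem:stdualMNrule} followed by $S_{r-d}$ character orthogonality to collapse $\al=\be$. The only difference is that you are slightly more explicit about the bijection in the reindexing step and about the stability requirement on $r$, which the paper handles by simply declaring $r$ "sufficiently large."
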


\begin{proof}  Our proof follows the computation of Halverson \cite{Hal},
but in the language of symmetric functions using the irreducible character basis. We can apply Lemma
\ref{lem:takecoeff} to take the coefficient of $\st_\nu$ in
$p_k \st_\la$.  To begin, we note that
$p_k[\Xi_\mu] = \sum_{d|k} d m_d(\mu)$ and
choose an $r$ sufficiently large.  We compute the coefficient
by the expression
\begin{align}
\sum_{\mu \vdash r}
\frac{1}{z_\mu} \st_\nu[\Xi_\mu]
p_k[\Xi_\mu] \st_\la[\Xi_\mu]
&=\sum_{d|k} \sum_{\mu \vdash r}
\frac{d m_d(\mu)}{z_\mu} \st_\nu[\Xi_\mu]
\st_\la[\Xi_\mu]~.
\end{align}

Now the non-zero terms of the sum over $\mu$
occur when $m_d(\mu)>0$ and
in this case $\frac{d m_d(\mu)}{z_\mu} = \frac{1}{z_{\mu-(d)}}~.$
This is equivalent to summing over all partitions $\mub$
of size $r-d$ and $\mu = (d,\mub)$.

Therefore by applying Lemma \ref{lem:stdualMNrule},
\begin{align}
\sum_{\mu \vdash r}
\frac{1}{z_\mu} \st_\nu[\Xi_\mu]
p_k[\Xi_\mu] \st_\la[\Xi_\mu]
&= \sum_{d|k} \sum_{\mub \vdash r-d}
\frac{1}{z_\mub} \st_\nu[\Xi_{(d,\mub)}]
\st_\la[\Xi_{(d,\mub)}]\nonumber\\
&= \sum_{d|k} \sum_{\mub \vdash r-d}
\frac{1}{z_\mub}
\sum_{\al }
\sum_{\be}
(-1)^{ht(\la/\!\!/\al)+ht(\nu/\!\!/\be)}
\st_\al[\Xi_\mub]
\st_\be[\Xi_\mub]\nonumber\\
&= \sum_{d|k}
\sum_{\al }
\sum_{\be}
(-1)^{ht(\la/\!\!/\al)+ht(\nu/\!\!/\be)}
\sum_{\mub \vdash r-d}
\frac{1}{z_\mub}
\st_\al[\Xi_\mub]
\st_\be[\Xi_\mub]\nonumber\\
&= \sum_{d|k}
\sum_{\al }
(-1)^{ht(\la/\!\!/\al)+ht(\nu/\!\!/\al)}
\end{align}
where the sum over $\al$ is such that
$\la/\!\!/\al$ is a border strip of size $d$
and the sum over $\beta$ is
$\nu/\!\!/\be$ is a border strip of size $d$.  The last equality holds
by the orthogonality relations on the symmetric group characters.
\end{proof}

An expansion of Theorem \ref{thm:HalstMNrule} using the
second equality in Lemma \ref{lem:stdualMNrule} yields the
following alternate expression for the Murnaghan-Nakayama rule
for the irreducible character basis.
\begin{cor}
For $k>0$,
\begin{align*}
p_k \st_\lambda =
&divisors(k)~\st_\la +
\sum_{d|k}
\sum_{\nu}
\sum_{\substack{\al:\nu/\al \in B_d\\
\la/\al \in B_d}}
(-1)^{ht(\la/\al)+ht(\nu/\al)} \st_\nu\\
&+\sum_{d|k}
\sum_{\nu:\la/\nu \in B_d} (-1)^{ht(\la/\nu)} \st_\nu
+\sum_{d|k}
\sum_{\nu:\nu/\la \in B_d} (-1)^{ht(\nu/\la)} \st_\nu
\end{align*}
where $divisors(k)$ is equal to the number of divisors of $k$.
\end{cor}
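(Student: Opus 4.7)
The plan is to take Theorem~\ref{thm:HalstMNrule} as the starting point and apply the second equality of Lemma~\ref{lem:stdualMNrule} in parallel to the two border-strip factors appearing in each summand, once for $\la/\!\!/\al$ and once for $\nu/\!\!/\al$. That second equality encodes a clean dichotomy for any skew $\mu/\!\!/\al \in B_d$: either the strip sits entirely in the elongated first row, forcing $\al = \mu$ with $ht(\mu/\!\!/\al) = 0$; or it lives in rows $2$ and below, where it is an honest $d$-border strip of the ordinary skew $\mu/\al$, with $\al \subsetneq \mu$ and $ht(\mu/\!\!/\al) = ht(\mu/\al)$. This is the only structural input needed beyond Theorem~\ref{thm:HalstMNrule}.

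Applying this dichotomy independently to both $\la/\!\!/\al$ and $\nu/\!\!/\al$ splits the inner $\al$-sum in Theorem~\ref{thm:HalstMNrule}, for each divisor $d\mid k$ and each $\nu$, into four pairwise disjoint cases: (A) $\al=\la$ and $\al=\nu$ (so $\nu=\la$), contributing sign $+1$; (B) $\al=\la$ with $\nu/\la \in B_d$, contributing $(-1)^{ht(\nu/\la)}$; (C) $\al=\nu$ with $\la/\nu \in B_d$, contributing $(-1)^{ht(\la/\nu)}$; and (D) both $\la/\al \in B_d$ and $\nu/\al \in B_d$ nontrivially, contributing $(-1)^{ht(\la/\al)+ht(\nu/\al)}$. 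Case (A) summed over $d\mid k$ produces exactly the term $divisors(k)\,\st_\la$; cases (B), (C), and (D) match the three remaining sums in the corollary verbatim.

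The only real obstacle is purely bookkeeping: one must verify that the four cases partition the index set without overlap. The key observation is that a border strip has at least one cell, so $\la/\al\in B_d$ in case (D) forces $\al\subsetneq\la$ and $\nu/\al\in B_d$ forces $\al\subsetneq\nu$, automatically separating (D) from (A)--(C). Similarly, $\nu/\la\in B_d$ in case (B) forces $\nu\neq\la$, separating (B) from (A), and symmetrically for (C). With disjointness confirmed, the four contributions reassemble into the expression of Theorem~\ref{thm:HalstMNrule}, and the corollary follows.
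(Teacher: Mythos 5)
Your proof is correct and follows exactly the route the paper indicates in its one-sentence justification: expand Theorem~\ref{thm:HalstMNrule} using the dichotomy encoded by the second equality of Lemma~\ref{lem:stdualMNrule}. The paper gives no further details, and your four-case split (applying the first-row-versus-lower-rows dichotomy independently to $\la/\!\!/\al$ and $\nu/\!\!/\al$) together with the disjointness check is precisely the bookkeeping the authors leave to the reader.
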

\end{section}

\begin{section}{Appendix I: Evaluations of symmetric functions at roots of unity}
\label{sec:specializations}

We refer readers to \cite[Lemma 5.10.1]{Lascoux} for the following
result which we will use as our starting point for
evaluations of symmetric functions.  In the following expressions, the notation $r|n$ is shorthand
for ``$r$ divides $n$.''

\begin{prop} \label{prop:evalhnXr}
For $r \geq 0$, $h_0[\Xi_r] = e_0[\Xi_r]=p_0[\Xi_r]=1$.
In addition, for $n>0$,
\begin{equation}
h_n[\Xi_r] = \delta_{r|n}, \hskip .2in
p_n[\Xi_r] = r \delta_{r|n}, \hskip .2in
e_n[\Xi_r] = (-1)^{r-1}\delta_{r=n}~.
\end{equation}
\end{prop}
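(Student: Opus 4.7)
The plan is to identify $\Xi_r$ explicitly as the multiset $\{1,\omega,\omega^2,\ldots,\omega^{r-1}\}$ of $r$-th roots of unity (where $\omega = e^{2\pi i/r}$) and then compute each of the three evaluations using the generating function of the corresponding basis, reducing everything to the factorization identity
\[
\prod_{k=0}^{r-1}(x - \omega^k) = x^r - 1.
\]
The cases $n=0$ are immediate from the definition of the $h_0$, $e_0$, $p_0$ specializations, so it suffices to handle $n>0$.

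First I would handle $p_n[\Xi_r]$ by direct computation: it equals the geometric sum $\sum_{k=0}^{r-1}\omega^{kn}$. If $r\mid n$ then every term is $1$ so the sum is $r$; otherwise $\omega^n\neq 1$ and the sum telescopes to $(1-\omega^{rn})/(1-\omega^n)=0$. This gives $p_n[\Xi_r]=r\,\delta_{r\mid n}$.

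Next I would handle $h_n[\Xi_r]$ via the generating function $\sum_{n\geq 0}h_n\,t^n = \prod_i(1-x_it)^{-1}$. Substituting $x_i\mapsto\omega^{i-1}$ for $1\leq i\leq r$ and applying the factorization identity with $x$ replaced by $1/t$ and then clearing denominators yields
\[
\prod_{k=0}^{r-1}(1-\omega^k t) = 1 - t^r.
\]
Therefore $\sum_{n\geq 0}h_n[\Xi_r]\,t^n = 1/(1-t^r) = \sum_{m\geq 0}t^{rm}$, and reading off coefficients gives $h_n[\Xi_r]=\delta_{r\mid n}$.

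Finally, for $e_n[\Xi_r]$ I would use $\sum_{n\geq 0}e_n\,t^n = \prod_i(1+x_it)$. Substituting as before and using the substitution $t\mapsto -t$ in the identity already derived for the $h_n$ case yields
\[
\prod_{k=0}^{r-1}(1+\omega^k t) = 1 - (-t)^r = 1 + (-1)^{r-1}t^r.
\]
Reading off coefficients gives $e_n[\Xi_r]=0$ for $n\notin\{0,r\}$ and $e_r[\Xi_r]=(-1)^{r-1}$, which is precisely $(-1)^{r-1}\delta_{r=n}$. The only subtle point is the sign bookkeeping in this last step, which is the reason I would reduce it to the $h_n$ computation by the substitution $t\mapsto -t$ rather than redoing the factorization from scratch.
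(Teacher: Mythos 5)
Your proof is correct. Note, however, that the paper does not actually supply a proof of Proposition \ref{prop:evalhnXr}: it simply cites it as Lemma 5.10.1 of Lascoux's notes and uses it as a starting point. So there is no ``paper proof'' to compare against; you have filled in a proof the paper omits. The argument you give is the standard one: the factorization $\prod_{k=0}^{r-1}(x-\omega^k)=x^r-1$, read off through the generating functions $\prod_i(1-x_it)^{-1}$ and $\prod_i(1+x_it)$ for $h_n$ and $e_n$ respectively, plus the direct geometric-sum computation for $p_n$. Your derivation of the $e_n$ case from the $h_n$ case via $t\mapsto -t$ is a clean way to keep the signs straight. One small remark: the identity $\prod_{k=0}^{r-1}(1-\omega^k t)=1-t^r$ tacitly assumes $r\geq 1$; the $r=0$ case of the ``$n>0$'' statement is degenerate (empty alphabet, both sides vanish) and is best noted separately rather than folded into the generating-function argument.
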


We will need to take this further and give a combinatorial
interpretation for the evaluation of $h_\lambda[\Xi_\mu]$
in order to make a
connection with character symmetric functions.


\begin{prop}
For a nonnegative integer $n$ and a partition $\mu$,
$h_n[\Xi_\mu]$ is equal to the number of weak compositions
$\alpha$
of size $n$ and length $\ell(\mu)$ such that
$\mu_i$ divides $\alpha_i$ for all $i$.
\end{prop}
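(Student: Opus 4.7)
The plan is to exploit the additive (plethystic) behavior of the complete homogeneous symmetric functions. Observe that the multiset $\Xi_\mu$ is by definition the disjoint union of the smaller multisets $\Xi_{\mu_1}, \Xi_{\mu_2}, \ldots, \Xi_{\mu_{\ell(\mu)}}$, and in the notation of symmetric functions this corresponds to an alphabet of the form $X_1 + X_2 + \cdots + X_{\ell(\mu)}$, where $X_i$ is the set of eigenvalues of a single $\mu_i$-cycle.

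First I would invoke the standard addition formula
\begin{equation*}
h_n[X_1 + X_2 + \cdots + X_\ell] = \sum_{\substack{\alpha \models_w n\\\ell(\alpha) = \ell}} \prod_{i=1}^{\ell} h_{\alpha_i}[X_i],
\end{equation*}
which follows from the generating-function identity $\sum_{n\geq 0} h_n[X+Y] t^n = \bigl(\sum_{n\geq 0} h_n[X] t^n\bigr)\bigl(\sum_{n\geq 0} h_n[Y] t^n\bigr)$ applied iteratively. Setting $X_i = \Xi_{\mu_i}$ and $\ell = \ell(\mu)$, this gives
\begin{equation*}
h_n[\Xi_\mu] = \sum_{\substack{\alpha \models_w n\\\ell(\alpha)=\ell(\mu)}} \prod_{i=1}^{\ell(\mu)} h_{\alpha_i}[\Xi_{\mu_i}].
\end{equation*}

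Next I would apply Proposition \ref{prop:evalhnXr}, which tells us that for $\alpha_i > 0$ the factor $h_{\alpha_i}[\Xi_{\mu_i}]$ equals $1$ if $\mu_i$ divides $\alpha_i$ and $0$ otherwise, while for $\alpha_i = 0$ the factor $h_0[\Xi_{\mu_i}] = 1$ (and $\mu_i$ trivially divides $0$). Thus every term in the above sum is either $0$ or $1$, and it contributes $1$ precisely when $\mu_i \mid \alpha_i$ for every $i$. Collecting surviving terms gives the claimed count of weak compositions of $n$ of length $\ell(\mu)$ with $\mu_i \mid \alpha_i$.

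There is essentially no obstacle here: the only thing to be a little careful about is the case $\alpha_i = 0$, which must be included in the divisibility condition (this is automatic since $\mu_i \mid 0$), and the trivial edge case $n=0$ where the only weak composition is the all-zero one, giving the value $1 = h_0[\Xi_\mu]$ correctly.
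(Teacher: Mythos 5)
Your argument is correct and is essentially identical to the paper's: both invoke the alphabet addition formula for $h_n$ to reduce to evaluations $h_{\alpha_i}[\Xi_{\mu_i}]$ over single cycles, and then apply Proposition \ref{prop:evalhnXr} to read off the divisibility condition $\mu_i \mid \alpha_i$. The remarks about $\alpha_i=0$ and $n=0$ are fine though not strictly needed, since the paper treats $h_0[\Xi_r]=1$ as part of the same proposition.
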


\begin{proof}
The alphabet addition formula for $h_n$ says
\begin{equation}
h_{n}[X_1,X_2,\cdots,X_r] =
\sum_{\substack{\alpha \models_w n\\
\ell(\al)=r}}
h_{\alpha_1}[X_1] h_{\alpha_2}[X_2]
\cdots h_{\alpha_r}[X_r]
\end{equation}
therefore evaluating at $\Xi_\mu$, we have
\begin{equation}\label{eq:hnatXimu}
h_n[\Xi_\mu] =
\sum_{\substack{\alpha \models_w n\\
\ell(\alpha) = \ell(\mu)}}
\prod_{i=1}^{\ell(\mu)} h_{\alpha_i}[\Xi_{\mu_i}]
\end{equation}
where the sum is over all weak compositions of $n$ (that
is, $0$ parts are allowed) such that the length of
the composition (including the $0$ parts) is equal
to the length of the partition $\mu$.
Since $h_{\alpha_i}[\Xi_{\mu_i}]=0$ unless $\mu_i | \alpha_i$,
the product contributes $1$ to the sum if and only if
the sum of the $\alpha_i$ is $n$ and $\mu_i$ divides $\alpha_i$.
Therefore this expression represents the number
of weak compositions of size $n$ and length $\ell(\mu)$
such that $\mu_i | \alpha_i$ for all $i$.
\end{proof}

\begin{example} \label{ex:evalh2}
To evaluate $h_2[\Xi_\mu]$, we can reduce
the computation by expressing it in terms of
$h_n[\Xi_r]$.
\begin{equation}
h_2[\Xi_\mu] = \sum_{i=1}^{\ell(\mu)}
h_2[\Xi_{\mu_i}] + \sum_{1 \leq i < j \leq \ell(\mu)}
h_1[\Xi_{\mu_i}] h_1[\Xi_{\mu_j}]~.
\end{equation}
Now we know from Proposition \ref{prop:evalhnXr}
that $h_2[\Xi_{\mu_i}] = 1$ if and only if $\mu_i = 1$ or $2$, hence
\begin{equation}
\sum_{i=1}^{\ell(\mu)}
h_2[\Xi_{\mu_i}] = m_1(\mu) + m_2(\mu)~.
\end{equation}
In addition, Proposition \ref{prop:evalhnXr} implies
$h_1[\Xi_{\mu_i}]=1$ if and only if $\mu_i=1$
hence
\begin{equation}
\sum_{1 \leq i < j \leq \ell(\mu)}
h_1[\Xi_{\mu_i}] h_1[\Xi_{\mu_j}] = \pchoose{m_1(\mu)}{2}~.
\end{equation}

Alternatively, we can see this evaluation in terms of
its computation of the combinatorial interpretation.
We have that this is the number of weak compositions of
length $\ell(\mu)$ of the form $(0^{i-1},2,0^{\ell(\mu)-i})$
where the 2 is in the position $i$ where $\mu_i=1$ or $2$
or of the form $(0^{i-1},1,0^{j-i-1},1,0^{\ell(\mu)-j})$
where $\mu_i=\mu_j=1$.  Clearly there are $m_1(\mu) + m_2(\mu)$
of the first type and $\pchoose{m_1(\mu)}{2}$ of the
second.
\end{example}

Equation \eqref{eq:hnatXimu} allows us to give a
combinatorial interpretation for
$h_\lambda[\Xi_\mu]$ in terms of sequences of compositions.

\begin{definition}
Define the set $\bC_{\lambda,\mu}$ to be the
sequences $\alpha^{(\ast)} = (\al^{(1)},
\al^{(2)}, \cdots, \al^{(\ell(\la))})$ of weak compositions
$\alpha^{(i)} \models_w \lambda_i$ such that
$\ell(\alpha^{(i)}) = \ell(\mu)$ and $\mu_j$
divides $\alpha^{(i)}_j$ for each $1 \leq i \leq \ell(\la)$.
\end{definition}

\begin{example} \label{ex:bC} To compute $\bC_{(3,1),(3,3,2,2,1)}$ we
count pairs $(\alpha,\beta)$
where $\alpha$ is a weak composition of $3$ of length
$5$ and $\beta$ is a weak composition of $1$ of length
$5$ such that $\mu_i$ divides $\alpha_i$ and $\beta_i$
where $\mu = (3,3,2,2,1)$.  There are $5$ ways of doing
this given by the following pairs of compositions
\begin{equation*}
((00003),(00001)), ((00021),(00001)), ((00201),(00001))
\end{equation*}
\begin{equation*}
((03000),(00001)), ((30000),(00001))
\end{equation*}
\end{example}

\begin{prop}\label{prop:heval}
For partitions $\lambda$ and $\mu$,
\begin{equation}
h_\lambda[\Xi_\mu] = |\bC_{\lambda,\mu}|~.
\end{equation}
\end{prop}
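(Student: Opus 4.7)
The plan is to proceed by a direct multiplicative argument, reducing $h_\lambda[\Xi_\mu]$ to a product of evaluations $h_{\lambda_i}[\Xi_\mu]$ and then invoking the single-row result stated just before.

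First, I would use the fact that $h_\lambda = \prod_{i=1}^{\ell(\lambda)} h_{\lambda_i}$ in the ring of symmetric functions. Since evaluation at a fixed alphabet is a ring homomorphism from $Sym$ to $\mathbb{C}$, this factorization is preserved under the substitution $p_k \mapsto p_k[\Xi_\mu]$, giving
\begin{equation}
h_\lambda[\Xi_\mu] = \prod_{i=1}^{\ell(\lambda)} h_{\lambda_i}[\Xi_\mu].
\end{equation}

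Next, I would apply the proposition immediately preceding the statement, which asserts that $h_{\lambda_i}[\Xi_\mu]$ counts the number of weak compositions $\alpha^{(i)} \models_w \lambda_i$ of length $\ell(\mu)$ such that $\mu_j \mid \alpha^{(i)}_j$ for each $j$. Substituting into the product above,
\begin{equation}
h_\lambda[\Xi_\mu] = \prod_{i=1}^{\ell(\lambda)} \#\{\alpha^{(i)} \models_w \lambda_i : \ell(\alpha^{(i)}) = \ell(\mu),\ \mu_j \mid \alpha^{(i)}_j \text{ for all } j\}.
\end{equation}

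Finally, the product of cardinalities is the cardinality of the Cartesian product, and a tuple $(\alpha^{(1)}, \ldots, \alpha^{(\ell(\lambda))})$ in this Cartesian product is precisely an element of $\bC_{\lambda,\mu}$ by definition. Thus $h_\lambda[\Xi_\mu] = |\bC_{\lambda,\mu}|$, completing the proof.

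There is no real obstacle here; the statement is essentially just a repackaging of the previous proposition via the multiplicativity of $h_\lambda$. The only thing to be careful about is that the compositions $\alpha^{(i)}$ for different $i$ are independent (they need only satisfy the divisibility condition against the common $\mu$), which matches exactly the factorization of $h_\lambda$ as an unordered product of the $h_{\lambda_i}$.
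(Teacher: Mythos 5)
Your proof is correct and follows the same route as the paper's: factor $h_\lambda[\Xi_\mu]$ as $\prod_i h_{\lambda_i}[\Xi_\mu]$ via multiplicativity, apply the preceding single-part proposition to each factor, and identify the product of counts with $|\bC_{\lambda,\mu}|$ by the multiplication principle.
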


\begin{proof}
Since $h_{r}[\Xi_\mu]$
is equal to the number of weak compositions $\alpha$ of
length $\ell(\mu)$ and size $r$
such that $\mu_j | \alpha_j$ (that is, it is
equal to $|\bC_{(r),\mu}|$)
hence, by the multiplication
principle, the expression
\begin{equation}
h_\lambda[\Xi_\mu] = h_{\la_1}[\Xi_\mu]h_{\la_2}[\Xi_\mu]
\cdots h_{\la_{\ell(\la)}}[\Xi_\mu],
\end{equation}
is equal to the number of sequences of compositions
$\alpha^{(\ast)} = (\al^{(1)},
\al^{(2)}, \cdots, \al^{(\ell(\la))})$
such that $\alpha^{(i)}$ is a weak composition of $\lambda_i$
and of length $\ell(\mu)$ with $\mu_j$ divides $\alpha^{(i)}_j$
for each $1 \leq j \leq \ell(\mu)$.
\end{proof}

This combinatorial interpretation for
$h_\lambda[\Xi_\mu]$ is only the starting point.
We will give an expression for this quantity in
terms of symmetric function coefficients.

We can state this as the following combinatorial
result.
\begin{prop}\label{prop:evalht}
For partitions $\nu$ and $\mu$,
$
\HX_{\nu,\mu} :=
\left< h_\nu h_{|\mu|-|\nu|}, p_\mu\right>
$
is equal to the number of ways that
some of the cells of the diagram of $\mu$ can be filled with
the labels $\{1,2,\ldots,\ell(\nu)\}$
such that the cells in the same row all have the same label
and in total $\nu_j$ cells are labeled with
the integer $j$ for $1 \leq j \leq \ell(\nu)$.
\end{prop}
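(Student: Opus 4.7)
The plan is to interpret the algebraic identity already established in Equation \eqref{eq:multiptnchoose} combinatorially. From that equation we have
\begin{equation*}
\HX_{\la,\mu} = \sum_{\ga^{(\ast)}} \prod_{i=1}^{\mu_1} \pchoose{m_i(\mu)}{m_i(\ga^{(1)}), m_i(\ga^{(2)}), \ldots, m_i(\ga^{(\ell(\la))})}
\end{equation*}
where the outer sum ranges over sequences $(\ga^{(1)},\ldots,\ga^{(\ell(\la))})$ with $\ga^{(j)}\vdash \la_j$ and $\bigcup_j \ga^{(j)}\subseteq \mu$ (as multisets of parts). The multinomial coefficient in the $i$-th factor is understood with the convention that its ``remaining'' slot receives $m_i(\mu)-\sum_j m_i(\ga^{(j)}) = m_i(\mu\setminus\bigcup_j \ga^{(j)})$ elements, which is a nonnegative integer by the constraint on $\ga^{(\ast)}$.

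First, I would interpret one factor of the product. The rows of the Young diagram of $\mu$ of a given length $i$ form a set of $m_i(\mu)$ distinguishable objects (distinguishable by their row index, with French convention). The multinomial $\pchoose{m_i(\mu)}{m_i(\ga^{(1)}),\ldots,m_i(\ga^{(\ell(\la))})}$ counts the number of ways to partition these rows of length $i$ into $\ell(\la)+1$ blocks, where block $j$ (for $1\le j \le \ell(\la)$) consists of the rows of length $i$ that will be assigned the label $j$, and the last block is the set of rows of length $i$ that receive no label. The size of block $j$ is $m_i(\ga^{(j)})$, i.e.\ the number of parts equal to $i$ in $\ga^{(j)}$.

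Next I would take the product over $i$ and observe that this gives exactly the number of ways to assign labels from $\{1,\ldots,\ell(\la)\}$ to whole rows of $\mu$ (or to leave a row unlabeled) such that, for each $j$, the multiset of row lengths that get label $j$ is precisely the partition $\ga^{(j)}$. In particular, the total number of cells receiving label $j$ is $|\ga^{(j)}|=\la_j$.

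Finally, I would sum over all sequences $\ga^{(\ast)}$. Every labeling of a subset of the rows of $\mu$ by $\{1,\ldots,\ell(\la)\}$ for which the total count of cells labeled $j$ equals $\la_j$ corresponds to a unique choice of $\ga^{(\ast)}$, namely the one where $\ga^{(j)}$ is the partition of row lengths that received label $j$; conversely, each $\ga^{(\ast)}$ in the sum contributes exactly those labelings. The sum therefore counts precisely the labelings described in the statement, which yields $\HX_{\la,\mu}$. The main (and essentially only) obstacle is bookkeeping: one has to verify that the ``remaining'' slot in each multinomial is nonnegative exactly when $\bigcup_j \ga^{(j)}\subseteq \mu$ (as multisets of parts), and that when $|\la|>|\mu|$ there are no valid labelings, which matches the algebraic fact that $\HX_{\la,\mu}=0$ in that case.
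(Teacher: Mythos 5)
Your proposal is correct and follows essentially the same route as the paper's proof: both start from Equation \eqref{eq:multiptnchoose}, read each multinomial factor as a choice of which rows of $\mu$ of length $i$ receive which label (or none), take the product over $i$, and then sum over the sequences $\ga^{(\ast)}$ to account for all valid row-labelings.
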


\begin{proof}
Since $\HX_{\nu, \mu}$ is equal to the coefficient
of $\frac{p_\mu}{z_\mu}$, it is also equal
to $z_\mu$ times the coefficient of $p_\mu$
in the symmetric function
$h_{\nu_1} h_{\nu_2} \cdots h_{\nu_{\ell(\nu)}}
h_{|\mu|- |\nu|}$.  Since $h_{\nu_i} = \sum_{\ga^{(i)} \vdash \nu_i}
\frac{p_{\ga^{(i)}}}{z_{\ga^{(i)}}}$ then one way
that we can express this is
\begin{equation}\label{eq:hhtsum}
\HX_{\nu, \mu} =
\sum_{\ga^{(\ast)}} \frac{z_\mu}{z_{\ga^{(1)}} z_{\ga^{(2)}}
\cdots
z_{\mu \backslash \bigcup_i \ga^{(i)}}}
\end{equation}
where the sum is
over all sequences of partitions
$\ga^{(\ast)} = (\ga^{(1)}, \ga^{(2)}, \ldots,
\ga^{(\ell(\nu))})$ with
$\ga^{(i)}$ a partition of $\nu_{i}$ for $1\leq i \leq \ell(\nu)$
and such that the parts of
$\bigcup_i \ga^{(i)} =
\ga^{(1)} \cup \ga^{(2)} \cup \cdots \cup \ga^{(\ell(\nu))}$
are a subset of the parts of $\mu$.
The last partition that appears in this denominator
is $\mu\backslash \bigcup_i \ga^{(i)}$ and this is a partition
of size $|\mu| - |\nu|$ of all the parts of
$\mu$ which are not used in
$\ga^{(1)} \cup \ga^{(2)} \cup \cdots \cup \ga^{(\ell(\nu))}$.
We are using the convention that $h_{-r}=0$ for $r>0$, hence
$\HX_{\nu, \mu} =0$ if $|\nu|>|\mu|$ since we will have
$h_{|\mu|-|\nu|}=0$.

Now recall that $z_\mu =
\prod_{i=1}^{\mu_1} i^{m_i(\mu)} m_i(\mu)!$.
This implies that
\begin{align}
\frac{z_\mu}{z_{\ga^{(1)}} z_{\ga^{(2)}}
\cdots z_{\ga^{(\ell(\nu))}} z_{\mu \backslash \bigcup_i \ga^{(i)}}}
&= \prod_{i=1}^{\mu_1} \frac{m_i(\mu)!}{m_i(\ga^{(1)})!
m_i(\ga^{(2)})! \cdots m_i(\ga^{(\ell(\nu))})!
m_i(\mu\backslash\bigcup_i \ga^{(i)})!}\nonumber\\
&= \prod_{i=1}^{\mu_1}
\pchoose{m_i(\mu)}{
m_i(\ga^{(1)}),
m_i(\ga^{(2)}), \ldots, m_i(\ga^{(\ell(\nu))})}~.\label{eq:multiptnchoose}
\end{align}

We have established by Equations \eqref{eq:hhtsum} and
\eqref{eq:multiptnchoose} that
\begin{equation}
\HX_{\nu,\mu} =
\left< h_{|\mu|-|\nu|} h_\nu, p_\mu\right> =
\sum_{\ga^{(\ast)}}
\prod_{i=1}^{\mu_1}
\pchoose{m_i(\mu)}{
m_i(\ga^{(1)}),
m_i(\ga^{(2)}), \ldots, m_i(\ga^{(\ell(\nu))})}
\end{equation}
where the sum is over all sequences of partitions
$\ga^{(\ast)} = (\ga^{(1)}, \ga^{(2)}, \ldots,
\ga^{(\ell(\nu))})$ with
$\ga^{(j)}$ a partition of
$\nu_{j}$ for $1\leq j \leq \ell(\nu)$
and such that the parts of
$\bigcup_i \ga^{(i)} =
\ga^{(1)} \cup \ga^{(2)} \cup \cdots \cup \ga^{(\ell(\nu))}$
are a subset of the parts of $\mu$.

Take a filling of some of the rows of the partition
$\mu$ such that
the all the cells in a row are assigned the same label and the total number
of cells with label $j$ is equal to $\nu_j$.
The rows of $\mu$ that are labeled with $j$
determine a partition $\ga^{(j)}$ of size $\nu_j$.
This determines the sequence $\ga^{(\ast)}$.

Now among the $m_i(\mu)$ parts of the partition $\mu$
that are of size $i$, $m_i(\ga^{(j)})$ parts are labeled
with label $j$.  There are precisely
\begin{equation}
\pchoose{m_i(\mu)}{
m_i(\ga^{(1)}),
m_i(\ga^{(2)}), \ldots, m_i(\ga^{(\ell(\nu))})}
\end{equation}
ways this can be done.  In other words, there is a
sequence of positions which determines where the
$m_i(\ga^{(j)})$ parts of size $i$
are chosen from the $m_i(\mu)$ parts of $\mu$ of size $i$.

The reverse bijection is found by taking a sequence of
partitions $\ga^{(\ast)}$ and sequence of positions
which tells us which of the $m_i(\mu)$ parts of size $i$
of the partition which are filled with the labels $j$.
\end{proof}

\begin{example}\label{ex:HX}
To calculate 
$\HX_{(4),(3,3,2,2,1,1)}$
we need to label
some of the rows of the partition with just one label
such that the number of cells labeled is equal to $4$.
The following diagrams are the only ways that this can
be done.
\begin{equation*}
\young{\cr1\cr&\cr&\cr&&\cr1&1&1\cr}
\hskip .2in
\young{\cr1\cr&\cr&\cr1&1&1\cr&&\cr}
\hskip .2in
\young{1\cr\cr&\cr&\cr&&\cr1&1&1\cr}
\hskip .2in
\young{1\cr\cr&\cr&\cr1&1&1\cr&&\cr}
\hskip .2in
\young{\cr\cr1&1\cr1&1\cr&&\cr&&\cr}
\hskip .2in
\young{1\cr1\cr&\cr1&1\cr&&\cr&&\cr}
\hskip .2in
\young{1\cr1\cr1&1\cr&\cr&&\cr&&\cr}
\end{equation*}
Therefore
$\HX_{(4),(3,3,2,2,1,1)}=7$.
Similarly, to compute
$\HX_{(3,1),(3,3,2,2,1,1)}$.
we fill the rows of diagram of the partition
$(3,3,2,2,1,1)$ with labels of three $1$'s and a $2$
such that the whole row is given the same label.
\begin{equation*}
\young{\cr2\cr&\cr&\cr&&\cr1&1&1\cr}
\hskip .2in
\young{\cr2\cr&\cr&\cr1&1&1\cr&&\cr}
\hskip .2in
\young{2\cr\cr&\cr&\cr&&\cr1&1&1\cr}
\hskip .2in
\young{2\cr\cr&\cr&\cr1&1&1\cr&&\cr}
\hskip .2in
\young{1\cr2\cr&\cr1&1\cr&&\cr&&\cr}
\hskip .2in
\young{1\cr2\cr1&1\cr&\cr&&\cr&&\cr}
\hskip .2in
\young{2\cr1\cr&\cr1&1\cr&&\cr&&\cr}
\hskip .2in
\young{2\cr1\cr1&1\cr&\cr&&\cr&&\cr}
\end{equation*}
Therefore
$\HX_{(3,1),(3,3,2,2,1,1)}=8$.
\end{example}

Recall that we indicate that $\pi$ is a multiset partition
of the multiset with the letter $i$ occurring $\la_i$ times
($1 \leq i \leq \ell(\la)$) by
the symbol $\pi \mvdash
\dcl1^{\la_1},2^{\la_2},\ldots,\ell^{\la_\ell}\dcr$.
Recall also that $\mt(\pi)$ is the integer
partition of size equal to the number of parts in $\pi$
where the parts of $\mt(\pi)$ are the multiplicities
of the multisets which appear in $\pi$.

A multiset partition of a multiset does not naturally have order
on the parts but we will need one for establishing
a unique mapping.
The order is not especially important,
but we need it to be consistent with how we label column strict tableaux
so we choose to put the sets which occur most often first,
and among those that occur the same number of times
we use lexicographic order if the elements of the set
are read in increasing order.

\begin{example}
In the multiset $\dcl1^{12},2^7,3^2\dcr$
we have an example multiset partition
\begin{equation}
\pi =
\dcl\dcl1,2\dcr,\dcl1,2\dcr,\dcl1,2\dcr,\dcl1,1,1\dcr,\dcl1,1,1\dcr,
\dcl1,2,2,3\dcr,\dcl1,2,2,3\dcr,\dcl1\dcr\dcr
\end{equation}
The parts $\dcl1,2\dcr$ occur first because they occur 3 times,
$\dcl1,1,1\dcr$ and $\dcl1,2,2,3\dcr$ each occur twice so they are each
second and $\dcl1,1,1\dcr$ is before $\dcl1,2,2,3\dcr$ because
$111 <_{lex} 1223$.  Finally $\dcl1\dcr$ only occurs once, hence
it is last.
\end{example}

\begin{definition}
Let $\bP_{\lambda,\mu}$ be the set of pairs
$(\pi, T)$ where $\pi$ is a multiset partition of the multiset
$\dcl1^{\la_1},2^{\la_2},\ldots,\ell^{\la_\ell}\dcr$
and $T$ is a filling of some of the cells
of the diagram of the partition $\mu$
with content $\ga = \mt(\pi)$ and all cells
in the same row have the same label.
\end{definition}

A restatement of Proposition \ref{prop:evalht} would be that
\begin{equation}
\sum_{\pi \mvdash \dcl1^{\la_1},2^{\la_2},\ldots,\ell^{\la_\ell}\dcr}
\HX_{\mt(\pi),\mu} = |\bP_{\lambda,\mu}|~.
\end{equation}

\begin{example}\label{ex:bP}
The set $\bP_{(3,1),(3,3,2,2,1)}$
consists of the following $5$ pairs of
multiset partitions
of the multiset $\dcl1,1,1,2\dcr$ and fillings of the
diagram for $(3,3,2,2,1)$.
\begin{equation*}
(\dcl\dcl1,1,1,2\dcr\dcr, \young{1\cr&\cr&\cr&&\cr&&\cr}),\hskip .1in
(\dcl\dcl1\dcr,\dcl1\dcr,\dcl1,2\dcr\dcr,
\young{2\cr1&1\cr&\cr&&\cr&&\cr}),\hskip .1in
(\dcl\dcl1\dcr,\dcl1\dcr,\dcl1,2\dcr\dcr, \young{2\cr&\cr1&1\cr&&\cr&&\cr}),
\end{equation*}
\begin{equation*}
(\dcl\dcl1\dcr,\dcl1\dcr,\dcl1\dcr,\dcl2\dcr\dcr,
\young{2\cr&\cr&\cr1&1&1\cr&&\cr}),\hskip .1in
(\dcl\dcl1\dcr,\dcl1\dcr,\dcl1\dcr,\dcl2\dcr\dcr, \young{2\cr&\cr&\cr&&\cr1&1&1\cr})
\end{equation*}
\end{example}

\begin{definition}
Let $\bT_{\lambda,\mu}$ be the set of fillings of
some of the cells of the partition $\mu$ with
content $\lambda$ such that any number of
labels can go into the
same cell but all cells in the same row must have
the same multiset of labels.
\end{definition}

\begin{example}\label{ex:bT}
The set $\bT_{(3,1),(3,3,2,2,1)}$ consists of the
following $5$ fillings
\begin{equation*}
\young{w\cr&\cr&\cr&&\cr&&\cr}
\hskip .2in
\young{12\cr1&1\cr&\cr&&\cr&&\cr}
\hskip .2in
\young{12\cr&\cr1&1\cr&&\cr&&\cr}
\hskip .2in
\young{2\cr&\cr&\cr1&1&1\cr&&\cr}
\hskip .2in
\young{2\cr&\cr&\cr&&\cr1&1&1\cr}
\end{equation*}
where the $w$ in the first diagram is $w = 1112$
($w$ represents the multiset of labels $\dcl1,1,1,2\dcr$).
\end{example}

The last two examples suggest the following
proposition.

\begin{prop}\label{prop:rhseval}
For partitions $\lambda$ and $\mu$,
\begin{equation}
\sum_{\pi \mvdash \dcl1^{\la_1},2^{\la_2},\ldots,\ell^{\la_\ell}\dcr}
\HX_{\mt(\pi),\mu}
=|\bT_{\lambda,\mu}|~.
\end{equation}
\end{prop}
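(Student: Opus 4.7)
The plan is to exhibit an explicit bijection $\Phi\colon \bP_{\lambda,\mu} \to \bT_{\lambda,\mu}$; combined with the restatement of Proposition \ref{prop:evalht} displayed just above, this immediately yields the desired identity. Given $(\pi, T) \in \bP_{\lambda,\mu}$, let $S_1, S_2, \ldots, S_k$ denote the distinct multisets appearing in $\pi$, listed in the fixed total order on multisets agreed upon in the preceding discussion (by multiplicity in $\pi$, with ties broken lexicographically when the sets are read in increasing order). Since $\mt(\pi)_r$ is by construction the multiplicity of $S_r$ in $\pi$ and $T$ has content $\mt(\pi)$, the label $r$ occupies exactly $\mt(\pi)_r$ cells of $T$. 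Define $\Phi(\pi, T) = T'$ to be the filling of the same cells of $\mu$ obtained by replacing each label $r$ in $T$ with the multiset $S_r$.

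To see that $T' \in \bT_{\lambda,\mu}$, observe first that all cells in any row of $T$ share a single label by the definition of $\bP_{\lambda,\mu}$, so the corresponding row of $T'$ is uniformly filled with one multiset. Second, the total number of occurrences of any integer $j$ among the entries of $T'$ is
\begin{equation*}
\sum_{r=1}^{k} \mt(\pi)_r \, m_j(S_r),
\end{equation*}
which counts the copies of $j$ spread across all parts of $\pi$, and therefore equals $\lambda_j$ because $\pi \mvdash \dcl 1^{\lambda_1}, \ldots, \ell^{\lambda_\ell}\dcr$.

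The inverse $\Psi\colon \bT_{\lambda,\mu} \to \bP_{\lambda,\mu}$ reads off the row-multisets of $T'$ together with their cell multiplicities to form $\pi$, reorders the distinct multisets as $S_1, \ldots, S_k$ under the same fixed total order, and relabels each occurrence of $S_r$ by the integer $r$ to recover $T$. The condition that the content of $T'$ equals $\lambda$ forces $\pi \mvdash \dcl 1^{\lambda_1}, \ldots, \ell^{\lambda_\ell}\dcr$, while $T$ automatically inherits uniform rows and content $\mt(\pi)$ from $T'$, so $(\pi, T) \in \bP_{\lambda,\mu}$. The maps $\Phi$ and $\Psi$ are manifestly inverse. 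The only subtle point in the whole argument is the bookkeeping required to make the correspondence $r \leftrightarrow S_r$ canonical via the fixed total order, which is a purely notational matter and poses no real obstacle.
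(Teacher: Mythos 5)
Your proof is correct and follows essentially the same route as the paper's own argument: after invoking the displayed restatement of Proposition~\ref{prop:evalht} to reduce the claim to $|\bP_{\lambda,\mu}| = |\bT_{\lambda,\mu}|$, you construct exactly the bijection the authors use — replace each integer label $r$ in $T$ by the $r$-th distinct multiset of $\pi$ under the fixed total order, and invert by reading off cell entries and re-indexing. The bookkeeping you verify (uniform rows, content adding up to $\lambda$, and the role of $\mt(\pi)$ as both the content of $T$ and the multiplicity vector of $\pi$) is precisely what the paper checks.
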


\begin{proof}
By Proposition \ref{prop:evalht} we have established that
\begin{equation}
\sum_{\pi \mvdash \dcl1^{\la_1},2^{\la_2},\ldots,\ell^{\la_\ell}\dcr}
\HX_{\mt(\pi),\mu} = |\bP_{\lambda,\mu}|,
\end{equation}
hence it remains to show that there is a bijection
between the elements of $\bP_{\lambda,\mu}$ and
$\bT_{\lambda,\mu}$.

Take a pair $(\pi, T) \in \bP_{\lambda,\mu}$.  Since the
order on the parts of the multiset partition of $\pi$ puts
them in weakly decreasing order dependent on the number of times that they occur, the label $1$ in $T$ can be replaced by the
first multiset which occurs in $\pi$, the $2$ can be
replaced by the second multiset which occurs, etc.
The result will be a filling $T'$ which is an element
of $\bT_{\lambda,\mu}$ because all rows of $T$
have the same labels (and so will be the case with $T'$)
and the content of $T'$ will be the same as the
multiset and so will be $\dcl1^{\la_1},2^{\la_2},
\ldots, \ell^{\la_\ell}\dcr$.

\begin{example} \label{ex:bigfilling}
Consider the pair $(\pi,T) \in \bP_{(12,7,2),(3,3,2,2,1)}$
where
$$\pi =
\dcl\dcl1,2\dcr,\dcl1,2\dcr,\dcl1,2\dcr,\dcl1,1,1\dcr,\dcl1,1,1\dcr,
\dcl1,2,2,3\dcr,\dcl1,2,2,3\dcr,\dcl1\dcr\dcr
$$ and
$T$ is the filling
\begin{equation*}
\young{4\cr
2&2\cr
3&3\cr
&&\cr
1&1&1\cr}
\end{equation*}
This is mapped to the filling $T'$ equal to
\begin{equation*}
\Young{1\cr
\hbox{\tiny 111}&\hbox{\tiny 111}\cr
\hbox{\Tiny 1223}&\hbox{\Tiny 1223}\cr
&&\cr
12&12&12\cr}
\end{equation*}
where $T' \in \bT_{(12,7,2),(3,3,2,2,1)}$.
The map is to replace the labels of $T$ with the parts
of $\pi$.
\end{example}

As long as the order in which the multisets occur in
$\pi$ is fixed, the map from pairs $(\pi, T) \in \bP_{\lambda,\mu}$
to $T' \in \bT_{\lambda,\mu}$ is invertible since we
can recover the multiset partition of a multiset
$\pi$ as the set of labels in $T'$ and the filling
$T$ is just the filling $T'$ with each of the sets
replaced by the integer order in which the set appears
in $\pi$.
\end{proof}

We are now prepared to provide a proof of the following
result which was first stated as Theorem \ref{thm:hrts}
in Section \ref{sec:symgrpchar} and used to provide
a definition of the induced trivial character basis.

\begin{theorem}
For all partitions $\lambda$,
\begin{equation}
h_\lambda[\Xi_\mu] =
\sum_{\pi \mvdash \dcl1^{\la_1},2^{\la_2},\ldots,\ell^{\la_\ell}\dcr}
\HX_{\mt(\pi),\mu}~.
\end{equation}
\end{theorem}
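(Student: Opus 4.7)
The plan is to bolt together the two combinatorial interpretations already established in this appendix. By Proposition~\ref{prop:heval} we have $h_\lambda[\Xi_\mu] = |\bC_{\lambda,\mu}|$, and by Proposition~\ref{prop:rhseval} we have
\[
\sum_{\pi \mvdash \dcl 1^{\la_1},2^{\la_2},\ldots,\ell^{\la_\ell}\dcr} \HX_{\mt(\pi),\mu} = |\bT_{\lambda,\mu}|.
\]
So the theorem reduces to exhibiting a bijection between the set $\bC_{\lambda,\mu}$ of sequences of divisibility-constrained weak compositions and the set $\bT_{\lambda,\mu}$ of row-uniform multiset fillings of (a subset of cells of) $\mu$ with content $\lambda$.

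The bijection I would use just repackages a sequence of compositions as a row-uniform filling. Given $\alpha^{(\ast)} = (\alpha^{(1)}, \ldots, \alpha^{(\ell(\lambda))}) \in \bC_{\lambda,\mu}$, the condition $\mu_j \mid \alpha^{(i)}_j$ lets us define $c_{ij} := \alpha^{(i)}_j / \mu_j$, a nonneg\-a\-tive integer. Build a filling $T \in \bT_{\lambda,\mu}$ by placing $c_{ij}$ copies of the label $i$ in every cell of row $j$ of the diagram of $\mu$ (so each cell of row $j$ gets the multiset $\dcl 1^{c_{1j}}, 2^{c_{2j}}, \ldots, \ell^{c_{\ell j}} \dcr$, with empty cells left blank). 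By construction each row is uniform, and the total number of $i$-labels used is
\[
\sum_{j=1}^{\ell(\mu)} \mu_j \, c_{ij} \;=\; \sum_{j=1}^{\ell(\mu)} \alpha^{(i)}_j \;=\; \lambda_i,
\]
so $T$ has content $\lambda$.

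For the inverse, start from $T \in \bT_{\lambda,\mu}$. Row uniformity means every cell in row $j$ carries the same multiset, and I let $c_{ij}$ denote the multiplicity of label $i$ in the (common) multiset filling any cell of row $j$. Setting $\alpha^{(i)}_j := \mu_j \, c_{ij}$ makes each $\alpha^{(i)}$ a weak composition of length $\ell(\mu)$ satisfying the divisibility condition $\mu_j \mid \alpha^{(i)}_j$, and the content condition on $T$ forces $|\alpha^{(i)}| = \lambda_i$, so $\alpha^{(\ast)} \in \bC_{\lambda,\mu}$. The two maps are visibly inverse to each other.

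The only thing requiring care is confirming that this bijection is well-defined on both sides: row uniformity of $T$ is equivalent (once a row length $\mu_j$ is fixed) to each label count in that row being a multiple of $\mu_j$, which is exactly the divisibility condition built into $\bC_{\lambda,\mu}$. Once that observation is made, assembling the theorem is a two-line chain: $h_\lambda[\Xi_\mu] = |\bC_{\lambda,\mu}| = |\bT_{\lambda,\mu}| = \sum_{\pi} \HX_{\mt(\pi),\mu}$. I do not foresee a real obstacle; the work was done in Propositions~\ref{prop:heval} and~\ref{prop:rhseval}, and the bijection above is the missing bridge.
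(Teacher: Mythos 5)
Your argument is correct and essentially reproduces the paper's own proof: both reduce the statement to Propositions~\ref{prop:heval} and~\ref{prop:rhseval} and then give the same bijection between $\bC_{\lambda,\mu}$ and $\bT_{\lambda,\mu}$, pairing $\alpha^{(i)}_j$ with $\mu_j$ times the multiplicity of label $i$ in row $j$ of the filling. The only cosmetic difference is the direction you present first (compositions to tableaux rather than tableaux to compositions).
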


\begin{proof}
We have already established in Proposition
\ref{prop:heval} that
$h_\lambda[\Xi_\mu] = |\bC_{\lambda,\mu}|$
and in Proposition \ref{prop:rhseval}
that
\begin{equation}
\sum_{\pi \mvdash \dcl1^{\la_1},2^{\la_2},\ldots,\ell^{\la_\ell}\dcr}
\HX_{\mt(\pi),\mu}
= |\bP_{\lambda,\mu}|
= |\bT_{\lambda,\mu}|~.
\end{equation}
In order to show this result
we will provide a bijection between $\bC_{\lambda,\mu}$
and $\bT_{\lambda,\mu}$.

We start with a filling $T \in \bT_{\lambda,\mu}$ and
define a list of weak compositions
$$\al^{(\ast)} =
(\al^{(1)}, \al^{(2)}, \ldots, \al^{(\ell(\la))})$$
where $\al^{(d)}_i$ is equal to the number of labels $d$
in the $i^{th}$ row of $T$.  Since the content of
$T$ is equal to the multiset $\lambda$, we know that
$\al^{(d)}$ will be a weak composition of $\lambda_d$
and because in row $i$ the multiset of labels is the same
for each cell in a row $\mu_i$, it must be that $\mu_i$
divides $\al^{(d)}_i$.

This procedure is reversible since starting with a sequence
of weak compositions $\alpha^{(\ast)} \in \bC_{\lambda,\mu}$,
we can place $\al^{(d)}_i/\mu_i$ labels of $d$ in each cell
of the $i^{th}$ row of the diagram $\mu$ to recover the
filling $T' \in \bT_{\lambda,\mu}$.

We conclude that
\begin{equation}
h_\lambda[\Xi_\mu] = |\bC_{\lambda,\mu}|
= |\bT_{\lambda,\mu}|
= |\bP_{\lambda,\mu}|
=\sum_{\pi \mvdash \dcl1^{\la_1},2^{\la_2},\ldots,\ell^{\la_\ell}\dcr}
\HX_{\mt(\pi),\mu}~.\qedhere
\end{equation}
\end{proof}
\end{section}

\begin{section}{Appendix II: Roots of unity and zeros of
multivariate polynomials} \label{sec:rootsofunity}

The goal of this section is to prove the following
basic proposition.
\begin{prop}\label{prop:rootsimplsf} Let $f,g \in Sym$ be symmetric functions
of degree less than or equal to some positive integer $n$.
Assume that $$f[\Xi_\ga] = g[\Xi_\ga]$$ for all partitions
$\ga$ such that $|\ga| \leq n$, then
$$f=g$$
as elements of $Sym$.
\end{prop}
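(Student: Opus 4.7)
The plan is to reduce the claim to a vanishing statement for polynomials evaluated at non-negative integer points. Set $h = f - g$; it suffices to show that if $h \in Sym$ has degree at most $n$ and $h[\Xi_\mu]=0$ for every $\mu$ with $|\mu| \leq n$, then $h = 0$. Because the power sums are algebraically independent generators, $h$ is uniquely a polynomial $P(p_1, \ldots, p_n)$ of weighted degree at most $n$, with $p_k$ assigned weight $k$.

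Next I would perform the change of variables $p_k \mapsto \sum_{d \mid k} d\, x_d$, which is a weighted-degree-preserving ring isomorphism $\mathbb{Q}[p_1,\ldots,p_n] \to \mathbb{Q}[x_1,\ldots,x_n]$ (with $\deg x_k = k$) whose inverse $x_k \mapsto \tfrac{1}{k}\sum_{d \mid k}\mu(k/d)\,p_d$ is given by Möbius inversion. Let $Q(x_1,\ldots,x_n)$ be the image of $P$. By Proposition \ref{prop:evalhnXr} and additivity of $p_k$ on disjoint alphabets, $p_k[\Xi_\mu] = \sum_{d \mid k} d\,m_d(\mu)$, so evaluating $h$ at $\Xi_\mu$ coincides with evaluating $Q$ at the tuple $(m_1(\mu),\ldots,m_n(\mu))$. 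As $\mu$ ranges over partitions with $|\mu| \leq n$, these tuples range exactly over non-negative integer vectors $(a_1,\ldots,a_n)$ satisfying $\sum_i i\,a_i \leq n$, so the hypothesis becomes $Q(a_1,\ldots,a_n) = 0$ for every such tuple.

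I would then expand $Q$ in the falling factorial basis,
\[
Q = \sum_{|\lambda| \leq n} c_\lambda \prod_i (x_i)_{m_i(\lambda)},
\]
where $(x)_r = x(x-1)\cdots(x-r+1)$. These products form a basis for weighted-degree-$\leq n$ polynomials because each $\prod_i (x_i)_{m_i(\lambda)}$ is a unit-diagonal triangular combination of monomials, and the number of indices $\lambda$ with $|\lambda| \leq n$ equals the dimension $\sum_{k=0}^n p(k)$. Define $\lambda \preceq \mu$ iff $m_i(\lambda) \leq m_i(\mu)$ for every $i$; since $(m_i(\mu))_{m_i(\lambda)} = 0$ whenever $m_i(\mu) < m_i(\lambda)$, the matrix of evaluations is triangular with respect to $\preceq$ with nonzero diagonal entries $\prod_i m_i(\mu)!$. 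An induction on $\preceq$ with base case $\mu = \emptyset$ (which gives $c_\emptyset = Q(0,\ldots,0) = 0$) then forces every $c_\lambda = 0$, so $Q \equiv 0$ and hence $h = 0$.

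The main technical hurdle is verifying that the falling factorials $\prod_i (x_i)_{m_i(\lambda)}$ indexed by $|\lambda| \leq n$ form a basis of weighted-degree-$\leq n$ polynomials and that the number of integer tuples $(a_1,\ldots,a_n)$ with $\sum_i i\,a_i \leq n$ matches the dimension $\sum_{k=0}^n p(k)$ of this space, so that the evaluation map is an isomorphism rather than merely injective on one side. Once this counting and triangularity are in hand, the falling-factorial identity does the combinatorial work and the Möbius change of variables supplies the translation between symmetric-function evaluation and integer-point evaluation.
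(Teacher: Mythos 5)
Your proof is correct, and it takes a genuinely different route from the paper's. The paper reduces to the same statement---if a polynomial $q(x_1,\ldots,x_n)$ of weighted degree $\le n$ (with $\deg x_i = i$) vanishes at every non-negative integer point $(m_1,\ldots,m_n)$ with $\sum_i i\,m_i \le n$, then $q = 0$---but proves it (Lemma~\ref{lem:polyzeros}) by induction on the number of variables, peeling off one variable at a time and appealing to the fact that a univariate polynomial of degree $r$ has at most $r$ roots. You instead expand $Q$ in the falling-factorial basis $\prod_i (x_i)_{m_i(\lambda)}$ and exploit the triangularity of the evaluation matrix with respect to the componentwise order $\preceq$ on multiplicity vectors, inducting on $\preceq$ (equivalently on $|\mu|$, since $\lambda \prec \mu$ forces $|\lambda| < |\mu|$). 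Both arguments are sound; the falling-factorial route is more structural and makes the ``indicator'' role of the basis explicit, which is in fact the same mechanism the paper deploys later in Lemma~\ref{lem:evalowp} for the elements $\bfpb_\gamma$, while the paper's variable-by-variable induction is more elementary and uses nothing beyond the univariate root bound. One small imprecision to note: the substitution $p_k \mapsto \sum_{d\mid k} d\, x_d$ is not \emph{graded}-degree-preserving but only filtration-preserving (e.g.\ $p_2 \mapsto x_1 + 2x_2$ mixes weights $1$ and $2$); what you actually need, and what is true, is that it carries the filtered piece of weighted degree $\le n$ isomorphically onto the corresponding piece, so the conclusion that $\deg Q \le n$ stands.
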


Our plan for the proof is to reduce this proposition 
to the fact that a univariate polynomial of degree
$n$ has at most $n$ zeros and hence a univariate polynomial
of degree at most $n$ with more than $n$ zeros must be
the $0$ polynomial.

To begin, notice that
$p_k[\Xi_r] = r$ if $r$ divides $k$ and it is equal to $0$
otherwise.
In general, we can express any partition
$\gamma$ in exponential notation
$\gamma = (1^{m_1} 2^{m_2}\cdots r^{m_r})$
where $m_i$ is the number of parts of size $i$ in $\ga$.
Recall that $p_k[\Xi_\gamma] = p_k[\Xi_{\gamma_1}] + p_k[\Xi_{\gamma_2}] +
\cdots + p_k[\Xi_{\gamma_{\ell(\gamma)}}]$, so
\begin{equation}
p_k[\Xi_\gamma] = \sum_{d|k} d m_d~.
\end{equation}
Hence any symmetric function $f$ evaluated at
some set of roots of unity is equal to
a polynomial in values $m_1, m_2, \ldots, m_n$
where
\begin{equation} \label{eq:sftopoly}
f[\Xi_\gamma] = f \Big|_{p_k \rightarrow \sum_{d|k} d m_d}
= q( m_1, m_2, \ldots, m_n )~.
\end{equation}
Moreover, if we know this polynomial $q(m_1, m_2, \ldots, m_d)$
we can use M\"obius inversion to recover
the symmetric function since if $p_k = \sum_{d|k} d m_d$,
then $k m_k = \sum_{d|k} \mu(k/d) p_d$ where
\begin{equation}
\mu(r) = \begin{cases}(-1)^d&\hbox{if $r$ is a
product of $d$ distinct primes}\\
0&\hbox{if $r$ is not square free}
\end{cases}.
\end{equation}
Therefore, we also have
\begin{equation}\label{eq:recoversf}
q\!\!\left( p_1, \frac{p_2-p_1}{2},\frac{p_3-p_1}{3},
\ldots, \frac{1}{n}\sum_{d|n} \mu(n/d) p_d \right) = f~.
\end{equation}

To show that
Proposition \ref{prop:rootsimplsf} is true, we will
prove that if $h[\Xi_\gamma] = 0$ for all partitions $|\gamma|\leq n$,
then $h = 0$ as a symmetric function where $h = f-g$.
We will do this by considering $h$ as $q(x_1, x_2, \ldots, x_n)$
where $x_r$ is replaced by $\frac{1}{r}\sum_{d|r}\mu(r/d) p_d$
and that this multivariate polynomial
evaluates to $0$ for all
$(m_1, m_2, \ldots, m_n)$
where $\gamma= (1^{m_1}2^{m_2}\cdots n^{m_n})$
is a partition with $|\gamma|\leq n$.
This is a consequence of the next lemma below.

Define the degree of a monomial so that $deg(x_i)=i$
and hence $deg(x_1^{a_1}x_2^{a_2} \cdots x_k^{a_k}) =
a_1 + 2a_2 + 3a_3+ \cdots + k a_k$.

\begin{lemma}\label{lem:polyzeros}
Let $q(x_1, x_2, \ldots, x_n)$ be an element in a multivariate
polynomial ring $\QQ[x_1, x_2,$  $\ldots, x_n]$ with
$deg(q(x_1, x_2, \ldots, x_n))\leq d$ for some $d$.
If $q(m_1, m_2, \ldots, m_n) = 0$ for all sequences
$(m_1, m_2, \ldots, m_n)$ with $m_i \geq 0$ and
$m_1 + 2m_2 + 3m_3+ \cdots +n m_n \leq d$, then
$q(x_1, x_2, \ldots, x_n) = 0$.
\end{lemma}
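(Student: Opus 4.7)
The plan is to change from the monomial basis to the falling-factorial basis and show that the resulting evaluation map at non-negative integer lattice points is a triangular system. Define $(y)_k := y(y-1)(y-2)\cdots(y-k+1)$. Because $(x_i)_a$ is a monic polynomial of ordinary degree $a$ in $x_i$, the family $\{\prod_{i=1}^n (x_i)_{a_i} : a \in \ZZ_{\geq 0}^n\}$ is a $\QQ$-basis of $\QQ[x_1,\ldots,x_n]$, and the weighted degree of $\prod_i (x_i)_{a_i}$ equals $\sum_i i a_i$. So I would expand
\begin{equation*}
q(x_1, \ldots, x_n) = \sum_{a \in A} c_a \prod_{i=1}^n (x_i)_{a_i}, \qquad A := \{ b \in \ZZ_{\geq 0}^n : \textstyle\sum_i i b_i \leq d\},
\end{equation*}
observing that $A$ simultaneously indexes the unknown coefficients and the valid evaluation points from the hypothesis.

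Next I would exploit the key arithmetic fact that for non-negative integers $m, a$, one has $(m)_a = 0$ whenever $a > m$, and $(m)_m = m!$. Hence for any $m \in A$,
\begin{equation*}
q(m) = \sum_{\substack{a \in A \\ a \leq m}} c_a \prod_{i=1}^n (m_i)_{a_i},
\end{equation*}
where the partial order is taken coordinatewise. The hypothesis says this vanishes for every $m \in A$. To finish, suppose for contradiction that $q \neq 0$ and choose $a^\ast$ minimal, with respect to the coordinatewise order, in the nonempty support $\{a \in A : c_a \neq 0\}$. Evaluating the displayed sum at $m = a^\ast \in A$, every index $a < a^\ast$ in the sum has $c_a = 0$ by minimality, so all terms collapse and
\begin{equation*}
0 = q(a^\ast) = c_{a^\ast} \prod_{i=1}^n a_i^\ast! ,
\end{equation*}
forcing $c_{a^\ast} = 0$, a contradiction. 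Hence every $c_a$ vanishes and $q \equiv 0$.

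The main conceptual step is just recognizing the right change of basis; once the falling-factorial expansion is in hand, triangularity does all the work and there is no messy calculation to grind through. An alternative approach via induction on $n$ (set $x_n = 0$ to invoke the inductive hypothesis on $n-1$ variables, factor out $x_n$, then shift $x_n \mapsto x_n + 1$ to reduce to smaller $d$) is also feasible, but it requires substantially more bookkeeping than the triangularity argument.
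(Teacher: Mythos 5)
Your argument is correct but takes a genuinely different route from the paper. The paper proves the lemma by induction on the number of variables: it fixes small non-negative integer values of $(m_1,\ldots,m_{n-1})$, notes that the resulting univariate polynomial in $x_n$ then has more non-negative integer roots than its $x_n$-degree and so vanishes, and feeds the leading-coefficient polynomial $q^{(r)}(x_1,\ldots,x_{n-1})$ back into the inductive hypothesis. Your proof is non-inductive: you pass to the falling-factorial basis $\prod_i (x_i)_{a_i}$, use that $(m)_a = 0$ for non-negative integers $m<a$ to make the evaluation map at points of $A$ coordinatewise triangular on coefficients indexed by $A$, and then annihilate a minimal element of the support. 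Both are sound. The paper's induction needs only the elementary fact that a univariate polynomial has at most $\deg$ many roots, while your argument is more structural and actually yields more: the triangular system shows the coefficients $c_a$ are explicitly \emph{recoverable} from the values $\{q(m): m\in A\}$, not merely forced to vanish. It also dovetails with the falling-factorial devices already in the paper --- the indicator-like elements $\bfpb_\gamma$ of Lemma~\ref{lem:evalowp}, and the Macdonald binomials $\binom{a}{\rho}$ in the remark after Equation~\eqref{eq:hhtpowerexp}, exploit exactly the same triangularity. The one step you might spell out is why the expansion of $q$ restricts to indices in $A$: each $\prod_i (x_i)_{a_i}$ is a $\ZZ$-combination of monomials $x^b$ with $b\leq a$ coordinatewise, hence of weighted degree at most $\sum_i i a_i$, so the change of basis between $\{\prod_i(x_i)_{a_i} : a\in A\}$ and $\{x^b : b\in A\}$ is unitriangular and the two spans agree.
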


\begin{proof}  We argue by induction on the number of variables $n$.
First we note that if $n=1$, then
if $q(x_1)$ is a polynomial of degree
$\leq d$ and $q(0) = q(1) = \cdots = q(d)=0$, then $q(x_1)=0$
because we know that a univariate polynomial of degree $r$
can have at most $r$ roots.

Let our induction assumption be that,
if $q(m_1, m_2, \ldots, m_{n-1}) = 0$ for all sequences
$(m_1, m_2, \ldots, m_{n-1})$ with $m_i \geq 0$ and
$m_1 + 2m_2 + 3m_3+ \cdots +(n-1) m_{n-1} \leq d$, then
$q(x_1, x_2, \ldots, x_{n-1}) = 0$.

Now assume that our inductive hypothesis is true and consider
a polynomial in $n$ variables,
\begin{equation}
q(x_1, x_2, \ldots, x_n) =
\sum_{i=0}^r
q^{(i)}(x_1, x_2, \ldots, x_{n-1})x_n^i
\end{equation}
where $r$ is the maximum power of the variable $x_n$ in the polynomial $q$
and $0 \leq r \leq d/n$ and the coefficient of $x_n^i$ is
$q^{(i)}(x_1, x_2, \ldots, x_{n-1})$ a multivariate polynomial of
degree less than or equal to $d-ni$.

We wish to show that $q(x_1, x_2, \ldots, x_n)$ is in fact $0$.  Assume 
 $r$ is the largest
exponent of $x_n$ for which
there is a non-zero coefficient,
then fix $(m_1, m_2, \ldots, m_{n-1})$ such that
$m_1 + 2m_2 + \cdots + (n-1)m_{n-1} \leq d-rn$.  Now
\begin{equation}
q(m_1, m_2, \ldots, m_{n-1}, m_n) = 0
\end{equation}
for each $m_n = 0, 1, 2, \ldots, r$, hence
$q(m_1, m_2, \ldots, m_{n-1}, x_n) = 0$
because it is a polynomial of degree at most $r$ with
more than $r$ roots and in particular
the coefficient
$q^{(r)}(m_1, m_2, \ldots, m_{n-1})$
is equal to $0$.
But now we have that
$q^{(r)}(x_1, x_2, \ldots, x_{n-1})$
is a polynomial of degree $\leq d-rn$
in $n-1$ variables which vanishes
at all $(x_1, x_2, \ldots, x_{n-1})=
(m_1, m_2, \ldots, m_{n-1})$ with
$m_1 + 2m_2 + \cdots + (n-1)m_{n-1} \leq d-rn$
and hence $q^{(r)}(x_1, x_2, \ldots,$ $x_{n-1})=0$
by our induction hypothesis.
\end{proof}

Hence, Proposition \ref{prop:rootsimplsf}
follows as a corollary.
\begin{proof}{(of Proposition \ref{prop:rootsimplsf})}
We will show the equivalent statement, by setting $h = f-g$,
that if $h$ is a symmetric function
of degree less than or equal to $n$ and $h[\Xi_\gamma] = 0$ for all
$|\gamma|\leq n$, then $h=0$.

This statement is directly equivalent to Lemma \ref{lem:polyzeros}
since if we replace $p_k$ in $h$ with $\sum_{d|k} d x_d$ then
$$q(x_1, x_2, \ldots, x_n) = h\Big|_{p_k \rightarrow \sum_{d|k} d x_d}$$
is a polynomial of degree at most $n$ that has the property that
$q(m_1, m_2, \ldots, m_n) = h[\Xi_\gamma] = 0$ for all
$\gamma = (1^{m_1}2^{m_2}\cdots n^{m_n})$ for all $|\gamma|\leq n$.
By Lemma \ref{lem:polyzeros}, $q(x_1, x_2, \ldots, x_n)=0$
and hence by equation \eqref{eq:recoversf}, $h = 0$.
\end{proof}

We considered the case in Proposition \ref{prop:rootsimplsf} that
$f[\Xi_\gamma] = g[\Xi_\gamma]$ implies $f=g$ when $\gamma$ is small, but
more often we will know that $f[\Xi_\gamma] = g[\Xi_\gamma]$ for all
$\gamma$ which are partitions of integers greater than or equal to some value $n$.
We can reduce the implication to the previous case in the following
proposition.

\begin{cor}\label{cor:evalsf} Let $f,g \in Sym$ be symmetric functions
of degree less than or equal to some positive integer $n$.
Assume that $$f[\Xi_\gamma] = g[\Xi_\gamma]$$ for all partitions
$\gamma$ such that $|\gamma| \geq n$, then
$$f=g$$
as elements of $Sym$.
\end{cor}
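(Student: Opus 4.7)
The plan is to reduce Corollary \ref{cor:evalsf} to Proposition \ref{prop:rootsimplsf} by exploiting the fact that any univariate polynomial that vanishes at infinitely many integers must vanish identically. Setting $h = f-g$, we have a symmetric function of degree at most $n$ such that $h[\Xi_\mu] = 0$ for every $\mu$ with $|\mu|\geq n$, and the goal is to show $h[\Xi_\mu]=0$ for every $\mu$ with $|\mu|\leq n$ as well; then Proposition \ref{prop:rootsimplsf} will finish the job.

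The key preliminary observation is that, since $\deg h \leq n$, its power-sum expansion involves only $p_1,\ldots,p_n$. Making the substitution $p_k \mapsto \sum_{d \mid k} d\, x_d$ from Equation \eqref{eq:sftopoly} yields a polynomial $q(x_1,\ldots,x_n)$ of weighted degree at most $n$ (with $\deg x_i = i$), and for \emph{every} partition $\mu$ one has
\begin{equation*}
h[\Xi_\mu] = q(m_1(\mu), m_2(\mu),\ldots,m_n(\mu)).
\end{equation*}
Indeed, $p_k[\Xi_\mu] = \sum_{d\mid k} d\, m_d(\mu)$, and for $k\leq n$ only multiplicities $m_d(\mu)$ with $d\leq n$ appear; parts of $\mu$ of size exceeding $n$ are invisible to $h$. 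This is the only subtle point, but it is crucial: it lets us enlarge $\mu$ freely without losing control of $h[\Xi_\mu]$.

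Given any $(a_1,\ldots,a_n)\in\NN^n$, I would then consider the family of partitions $\mu_M = (1^{a_1+M} 2^{a_2} \cdots n^{a_n})$ for $M\geq 0$. For all sufficiently large $M$ we have $|\mu_M|\geq n$, so by hypothesis
\begin{equation*}
q(a_1+M, a_2, \ldots, a_n) = h[\Xi_{\mu_M}] = 0.
\end{equation*}
Viewed as a polynomial in $M$ (with the other arguments fixed), the left-hand side vanishes for infinitely many nonnegative integers $M$, hence is identically zero; specializing at $M=0$ gives $q(a_1,a_2,\ldots,a_n) = 0$. Since $(a_1,\ldots,a_n)$ was arbitrary, $h[\Xi_\mu]=0$ for every partition $\mu$, and in particular for every $\mu$ with $|\mu|\leq n$. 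Proposition \ref{prop:rootsimplsf} then yields $h=0$, i.e.\ $f=g$. The argument is essentially routine once the polynomial interpretation is in place; there is no genuinely hard step, only the bookkeeping of confirming that $q$ controls $h[\Xi_\mu]$ uniformly in the number of $1$'s added.
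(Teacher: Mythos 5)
Your proof is correct, and it takes a genuinely different route from the one in the paper. The paper enlarges a small partition $\mub$ (with $|\mub|\leq n$) by appending a single part of size $n+1$, setting $\mu=(n+1,\mub)$; since $p_k[\Xi_{n+1}]=0$ for every $k\leq n$, and $h=f-g$ has degree at most $n$, the appended part is invisible to $h$, so $h[\Xi_\mu]=h[\Xi_{\mub}]$, and the hypothesis immediately forces $h[\Xi_{\mub}]=0$. No polynomial argument is needed beyond what Proposition \ref{prop:rootsimplsf} already carries. You instead append $M$ parts of size $1$, which \emph{do} change the evaluation, and then you recover the small-$\mu$ values by the auxiliary observation that $q(a_1+M,a_2,\ldots,a_n)$ is a polynomial in $M$ vanishing at infinitely many nonnegative integers. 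Both arguments hinge on the same preliminary point — that $h[\Xi_\mu]$ depends on $\mu$ only through the multiplicities $m_1(\mu),\ldots,m_n(\mu)$, so parts larger than $n$ are irrelevant — but the paper exploits this to make the enlarged and original evaluations literally equal, whereas you exploit it to make them differ only by a controlled polynomial shift. The paper's version is shorter and avoids a second invocation of the ``too many roots'' principle; yours is perhaps more robust in that it in fact establishes $q(a)=0$ for \emph{all} $a\in\NN^n$, not merely those of weighted degree $\leq n$, which makes the concluding appeal to Proposition \ref{prop:rootsimplsf} almost superfluous. Either way, the logic is sound.
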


\begin{proof}
We reduce the conditions of this corollary to the previous
case by considering partitions $\gab$ of size less than or equal
to $n$ and then let $\ga = (n+1,\gab)$.  Since for all $k \leq n$,
$p_k[\Xi_{n+1}] = 0$, then $p_k[\Xi_\ga] = p_k[\Xi_{\gab}]$.

Since $f$ (and similarly $g$) are of degree less than or equal to
$n$, then
$f = \sum_{|\lambda|\leq n} c_\lambda p_\lambda$ for some coefficients
$c_\lambda$ then $p_\lambda[\Xi_\ga] = p_\lambda[\Xi_\gab]$ since
$p_\lambda[\Xi_{n+1}]=0$.
Therefore $f[\Xi_\ga]=f[\Xi_\gab]$ and $g[\Xi_\ga] =
g[\Xi_\gab]$ and hence $f[\Xi_\gab] = g[\Xi_\gab]$.  By
Proposition \ref{prop:rootsimplsf} we can conclude that $f=g$.
\end{proof}
\end{section}

\begin{section}{Appendix III: Using Sage to compute
the character bases of symmetric functions}

This section does not appear in the journal published version
of this paper \cite{OZa} and is an expanded version of
a tutorial written in an extended abstract \cite{OZb} that
the authors uploaded to the arXiv so that code could be
reviewed for Sage \cite{sage, sage-co}.  The purpose of this
section is to give examples of the use of the irreducible character
basis and the induced trivial character basis as a computation tool in Sage.

\begin{subsection}{Symmetric functions in Sagemath}
Sage is an open source symbolic calculation program based on
the computer language Python.  It may be downloaded from the website:
\begin{center}
\url{https://www.sagemath.org/}
\end{center}
Computations may be done online without downloading the program at
\begin{center}
\url{https://sagecell.sagemath.org/} or at \href{https://cocalc.com/}{CoCalc}~.
\end{center}

The following two online tutorials will be useful to cover the basic functionality
of symmetric functions in Sage:
\begin{itemize}
\item \href{https://more-sagemath-tutorials.readthedocs.io/en/latest/tutorial-symmetric-functions.html}{Symmetric Functions Tutorial}
\item \href{https://doc.sagemath.org/html/en/reference/combinat/sage/combinat/sf/sf.html}{Documentation: Symmetric functions, with their multiple realizations}
\end{itemize}
This appendix will focus on the use of the character bases which are only superficially covered
in other documentation and tutorials.

A large community of mathematicians
participate in its support and add to its functionality
\cite{sage, sage-co}.  In particular, the built-in library
for symmetric functions includes a large extensible set of functions
which makes it possible to do calculations within the ring
following closely the mathematical notation that we use in this paper.
The language itself has a learning curve, but the contributions
made by the community towards the functionality
make that a barrier worth overcoming.

In version 6.10 or later of Sage (released January 2016)
these bases are available
as methods in the ring of symmetric functions.
At the time of this writing the current version is 9.3
(released May 2021).
\end{subsection}

\begin{subsection}{Defining bases}
We demonstrate examples of some of the definitions and results in this paper
by Sage calculations.  The first step to using the symmetric function code
is to define bases of the ring over the field of rational numbers.
\begin{verbatim}
sage: Sym = SymmetricFunctions(QQ)
sage: Sym
Symmetric Functions over Rational Field
sage: st = Sym.irreducible_symmetric_group_character(); st
Symmetric Functions over Rational Field in the irreducible character basis
sage: ht = Sym.induced_trivial_character(); ht
Symmetric Functions over Rational Field in the induced trivial symmetric
group character basis
\end{verbatim}

Instead of defining each basis one at a time,
alternatively there is a command to define all of the bases of the symmetric functions
that do not use a parameter with a single command:
\begin{verbatim}
sage: SymmetricFunctions(QQ).inject_shorthands('all', verbose=False)
\end{verbatim}
Note that by setting {\tt verbose=True} in that command will list
all of the bases defined.  For the commands in the examples below we will assume
that all of these bases have been added to the namespace.  A table with a list
of the bases defined by this command (in Sage version 9.3)
appears in Figure \ref{fig:namespace}.

\begin{figure}[h]\label{fig:namespace}
\begin{center}
\begin{tabular}{c|c}
mathematical name&Sage shorthand\cr
\hline
Schur&{\tt s}\cr
complete/homogeneous&{\tt h}\cr
elementary&{\tt e}\cr
monomial&{\tt m}\cr
power&{\tt p}\cr
forgotten&{\tt f}\cr
\end{tabular}
\begin{tabular}{c|c}
mathematical name&Sage shorthand\cr
\hline
irreducible character&{\tt st}\cr
induced trivial character&{\tt ht}\cr
orthogonal&{\tt o}\cr
symplectic&{\tt sp}\cr
Witt&{\tt w}\cr\cr
\end{tabular}
\end{center}
\caption{A list of bases defined by the {\tt inject\_shorthands('all')} command.}
\end{figure}
\end{subsection}

\begin{subsection}{Structure coefficients}
Theorem \ref{th:uniqueness} part (3) states that the structure
coefficients of the irreducible character basis are equal to the
reduced Kronecker coefficients.
We can compare the structure coefficients of the $\st$-basis with
the Kronecker product of Schur functions whose first part is
sufficiently large.  If the first row of the indexing partition
is removed from the terms in the expression for the Kronecker
product, there is a corresponding term in the product
of the irreducible character basis.
\begin{verbatim}
sage: st[2]*st[2]
st[] + st[1] + st[1, 1] + st[1, 1, 1] + 2*st[2] + 2*st[2, 1] + st[2, 2]
+ st[3] + st[3, 1] + st[4]
sage: s[6,2].kronecker_product(s[6,2])
s[4, 2, 2] + s[4, 3, 1] + s[4, 4] + s[5, 1, 1, 1] + 2*s[5, 2, 1] + s[5,
3] + s[6, 1, 1] + 2*s[6, 2] + s[7, 1] + s[8]
\end{verbatim}
\end{subsection}

\begin{subsection}{Change of bases}
If {\tt f} is an element of the ring of symmetric functions
and {\tt basis} is a basis of the symmetric functions then {\tt basis(f)} expresses the
element in {\tt f} in that basis.  For example, the symmetric function
in Example \ref{ex:htost} and Example \ref{ex:settableaux} are computed with the commands:
\begin{verbatim}
sage: st(h[2,1]) # express h_21 in the st-basis
4*st[] + 7*st[1] + 3*st[1, 1] + 4*st[2] + st[2, 1] + st[3]
sage: st(h[1,1,1,1])                                                                                                                                           
15*st[] + 37*st[1] + 31*st[1, 1] + 10*st[1, 1, 1] + st[1, 1, 1, 1] +
31*st[2] + 20*st[2, 1] + 3*st[2, 1, 1] + 2*st[2, 2] + 10*st[3] +
3*st[3, 1] + st[4]
\end{verbatim}

The main result of \cite{AssafSpeyer} is that the Schur expansion of
the irreducible character basis is alternating in sign by degree.
This can be observed in the following example calculation:
\begin{verbatim}
sage: s(st[2,2])
-s[1] + 4*s[1, 1] + 2*s[2] - 2*s[2, 1] + s[2, 2] - s[3]
\end{verbatim}
\end{subsection}

\begin{subsection}{Eigenvalues of a permutation matrix}
In Section \ref{subsec:ringsf} we introduce the operation of evaluating
a symmetric function at the eigenvalues of a permutation matrix.  The
character bases are defined so that $\st_\lambda[\Xi_\mu]$ is equal
to the irreducible $S_{|\mu|}$ character indexed by the partition
$(|\mu|-|\lambda|, \lambda)$ at a permutation of cycle type $\mu$.
In Sage, elements of the ring of symmetric functions
have the method {\tt eval\_at\_permutation\_roots}
which represents this operation.

For instance, an example of Proposition \ref{prop:evalhnXr} with $n=8$
and for $1 \leq r \leq 10$, we compute the following list of values:
\begin{verbatim}
sage: [h[8].eval_at_permutation_roots([r]) for r in range(1,11)]                                                                                               
[1, 1, 0, 1, 0, 0, 0, 1, 0, 0]
sage: [p[8].eval_at_permutation_roots([r]) for r in range(1,11)]                                                                                               
[1, 2, 0, 4, 0, 0, 0, 8, 0, 0]
sage: [e[8].eval_at_permutation_roots([r]) for r in range(1,11)]                                                                                               
[0, 0, 0, 0, 0, 0, 0, -1, 0, 0]
\end{verbatim}

In particular the character bases have the property that they evaluate
to certain values of characters of symmetric group modules.
\begin{verbatim}
sage: ht[3,1].eval_at_permutation_roots([3,3,2,2,1])
2
sage: st[3,1].eval_at_permutation_roots([3,3,2,2,1])
-1
\end{verbatim}
In Sage, these can be compared to the following coefficients
of the power sum basis in the complete and Schur elements as follows:
\begin{verbatim}
sage: h[7,3,1].scalar(p[3,3,2,2,1])
2
sage: s[7,3,1].scalar(p[3,3,2,2,1])
-1
\end{verbatim}
\end{subsection}

\begin{subsection}{A Frobenius character map}
In the paper \cite[Equation (7)]{OZ2} we define the characteristic map
\cite[Section 4.7]{Sagan}, \cite[Section I.7, p. 113]{Mac}
(sometimes referred to as the `Frobenius map')
by the function which interprets an element $f \in Sym$ as
a symmetric group character and sends it
to a symmetric function of degree $n$ which is
a generating function for the character values of $S_n$,
\begin{equation}
\phi_n(f) = \sum_{\mu \vdash n} f[\Xi_\mu] \frac{p_\mu}{z_\mu}~.
\end{equation}
In Sage, symmetric function elements have the 
method {\tt character\_to\_frobenius\_image} which
calculates the map $\phi_n$ on the element.

The character map is the origin of our definitions since
in the beginning of our investigations
the $\hht$ and $\st$ basis for us were the pre-images of
the Schur and complete symmetric functions in the $\phi_n$ map
for $n$ sufficiently large.

We have defined the $\st$ and $\hht$ bases so that they have the property
$\phi_n(\st_\lambda) = s_{(n-|\la|,\la)}$ and $\phi_n(\hht_\lambda) = h_{(n-|\la|,\la)}$ respectively
if $n \geq |\la|+\la_1$.  If $n<|\la|+\la_1$, then the
corresponding symmetric function will still be equivalent to 
a Schur function or complete symmetric function
indexed by a list of integers.

\begin{verbatim}
sage: s(st[3,2].character_to_frobenius_image(8))
s[3, 3, 2]
sage: h(ht[3,2].character_to_frobenius_image(6))
h[3, 2, 1]
sage: s[2,1].character_to_frobenius_image(6)
s[3, 2, 1] + 2*s[4, 1, 1] + 2*s[4, 2] + 3*s[5, 1] + s[6]
\end{verbatim}
Since the Schur function is the character of an irreducible $Gl_n$ module,
the last calculation indicates that
$${\rm Res}^{Gl_6}_{S_6} V^{(32)}_{Gl_6} \simeq
V^{(321)}_{S_6} \oplus \left(V^{(411)}_{S_6}\right)^{\oplus 2} \oplus
\left(V^{(42)}_{S_6}\right)^{\oplus 2} \oplus
\left(V^{(51)}_{S_6}\right)^{\oplus 3} \oplus
V^{(6)}_{S_6}~.$$

This calculation can be compared with the expansion
of the Schur function $s_{21}$ in
the irreducible character basis:
\begin{verbatim}
sage: st(s[2,1])
st[] + 3*st[1] + 2*st[1, 1] + 2*st[2] + st[2, 1]
\end{verbatim}
\end{subsection}

\begin{subsection}{Plethysm}
Since Sage also can be used to compute plethysms
a single coefficient in this expansion can be
(inefficiently) calculated using Littlewood's formula
$$r_{\la\mu} = \left< s_\la, s_{(n-|\mu|,\mu)}[1+s_1+s_2+\cdots] \right>$$
for the restriction coefficient:
\begin{verbatim}
sage: s[5,1](1+s[1]+s[2]+s[3]).scalar(s[2,1])
3
\end{verbatim}
\end{subsection}
\vskip .2in

\begin{subsection}{Orthogonal group character basis}
As mentioned in the introduction, two additional bases of the
symmetric functions were defined by Koike and Terada \cite{KT}
which correspond to the irreducible characters of the
orthogonal and symplectic groups coming from the Weyl
character formula.
In Sage, these bases are implemented
and the orthogonal basis has the shorthand {\tt o}
while the symplectic basis has the shorthand {\tt sp}.

Since we have the containment of the orthogonal group in the
general linear group and the symmetric group (as permutation matrices)
in the orthogonal group, that is,
$$S_n \subseteq O_n \subseteq Gl_n,$$
it follows that the Schur functions (characters of irreducible $Gl_n$
modules) will have non-negative coefficients when
expressed in the orthogonal basis
and the orthogonal basis will have non-negative coefficients
when expressed in the irreducible character basis.

For instance, we compute the examples:
\begin{verbatim}
sage: o(s[2,2])                                                                                                                                                
o[] + o[2] + o[2, 2]
sage: st(o[2,2])                                                                                                                                               
st[1] + st[1, 1] + 3*st[2] + 2*st[2, 1] + st[2, 2] + st[3]
\end{verbatim}
This computation implies that for $n$ sufficiently large
$${\rm Res}^{Gl_n}_{O_n} V^{(22)}_{Gl_n} \simeq V^{(22)}_{O_n} \oplus V^{(2)}_{O_n} \oplus V^{()}_{O_n}$$
and
$${\rm Res}^{O_n}_{S_n} V^{(22)}_{O_n} \simeq
V^{(n-4,22)}_{S_n} \oplus V^{(n-3,3)}_{S_n} \oplus (V^{(n-3,21)}_{S_n})^{\oplus 2} \oplus
(V^{(n-2,2)}_{S_n})^{\oplus 3} \oplus V^{(n-2,11)}_{S_n} \oplus V^{(n-1,1)}_{S_n}~.$$
\end{subsection}
\end{section}

\end{document}